\documentclass[11pt]{article}
\usepackage[a4paper,margin=1in]{geometry}
\usepackage{amsmath,amssymb,amsthm,mathtools}
\usepackage{mathpazo}            
\usepackage[T1]{fontenc}
\usepackage{microtype}
\usepackage{xcolor}
\usepackage{tikz}
\usetikzlibrary{patterns,arrows.meta,positioning}
\usepackage{booktabs}
\usepackage{enumitem}
\usepackage{graphicx}            
\usepackage[colorlinks=true,linkcolor=blue!50!black,citecolor=blue!50!black,urlcolor=blue!50!black]{hyperref}
\usepackage[nameinlink,noabbrev]{cleveref}

\usepackage[round]{natbib}

\setlist[itemize]{topsep=3pt,itemsep=2pt,parsep=0pt}
\setlist[enumerate]{topsep=3pt,itemsep=2pt,parsep=0pt}
\allowdisplaybreaks
\theoremstyle{plain}
\newtheorem{theorem}{Theorem}[section]
\newtheorem{lemma}[theorem]{Lemma}

\newtheorem{corollary}[theorem]{Corollary}

\newtheorem{conjecture}[theorem]{Conjecture}
\theoremstyle{definition}
\newtheorem{definition}[theorem]{Definition}

\newcommand{\C}{\mathbb C}
\newcommand{\R}{\mathbb R}

\newcommand{\Tr}{\operatorname{tr}}

\newcommand{\cA}{\mathcal A}

\newcommand{\cH}{\mathcal H}
\newcommand{\cP}{\mathcal P}

\newcommand{\op}{\mathrm{op}}
\newcommand{\F}{\mathrm F}

\newcommand{\norm}[1]{\left\lVert #1\right\rVert}
\newcommand{\abs}[1]{\left\lvert #1\right\rvert}
\newcommand{\KL}[2]{S\!\left(#1\,\middle\|\,#2\right)}

\newcommand{\nlsum}{\sum\nolimits}

\numberwithin{equation}{section}

\def\isarxiv{1}
\begin{document}

\title{\bfseries An Algebraic Matrix Spencer Theorem}
\ifdefined\isarxiv
\author{Emrullah Akbas\thanks{\texttt{emrullah.akbas@tum.de}. Math Department, TU Munich, Garching, Germany}  \and Suvrit Sra\thanks{\texttt{s.sra@tum.de}. Math Department, TU Munich, Garching, Germany} }
\else
\author{}
\fi
\date{}
\maketitle

\begin{abstract}
We develop an algebraic approach to matrix discrepancy based on the representation theory of finite-dimensional C$^*$-algebras. As an application, we resolve a substantial structured special case of the Matrix Spencer conjecture. In particular, we show that for every family of  contractions $A_1,\ldots,A_n$ that are  contained in a finite-dimensional $C^*$-algebra $\cA$ with $\dim_\C (\cA) \lesssim n$, there exists signs $x\in\{\pm1\}^n$ such that $\norm{\sum_{i=1}^n x_i A_i} \le O(\sqrt n)$. As a noteworthy special case, our main result also resolves the Group Spencer conjecture of~\citep{Bandeira24MatrixDiscrepancyBlog}. We furthermore prove that Matrix Spencer continues to  hold for low-rank perturbations of matrix families coming from an $C^*$-algebra of small dimension.

\end{abstract}

\tableofcontents

\section{Introduction}
Classical discrepancy theory asks for signings that balance a collection of vectors $a_{1},\dots, a_{n}\in\R^{d}$ satisfying $\| a_{i}\|_{\infty}\le 1$, for instance by finding a vector $x\in\{\pm1\}^{n}$ to minimize the discrepancy $\| \sum_{i=1}^{n}x_{i}a_{i}\|_{\infty}$. Choosing the signs  $(x_{1},\dots, x_{n})$  at random gives a coloring with discrepancy $O(\sqrt{n \log(n)})$ by a Chernoff/union bound argument. Remarkably, in a seminal result, \citet{Spencer1985} showed that one can do better and find a coloring with discrepancy $O(\sqrt n)$, improving on the random coloring by a logarithmic factor in the dimension. The Matrix Spencer conjecture~\citep{zouzias2012, meka2014} is a noncommutative analogue in which  vectors are replaced by symmetric matrices and the $\ell_\infty$ norm by the operator norm.
  
\begin{conjecture}[Matrix Spencer]
There exists a universal constant $C$, such that for any choice of $n$ symmetric matrices $(A_{i})_{i=1}^n \in \mathbb R^{n\times n}$ satisfying $\|A_i\|_{\op}\le 1$ there is a coloring $x\in\{\pm1\}^{n}$, such that
\begin{equation}
    \label{eq:1}
 \Bigl\|\sum\nolimits_{i=1}^n x_i A_i\Bigr\|_{\op} \le C \sqrt{n}.
\end{equation}
\end{conjecture}
 The non-commutative Khintchine inequality (or a matrix concentration inequality) shows that, as in the vector setting, randomly choosing the signs $x$ yields a coloring with discrepancy  $\left\|\sum_{i=1}^n x_i A_i\right\|_{\rm op}=O(\sqrt{ n \log n})$. Thus, Matrix Spencer asks whether this logarithmic factor can always be removed by a more careful choice of signs.  For diagonal, hence commuting, matrices the problem reduces to the classical discrepancy question, and Spencer's theorem gives the desired $O(\sqrt n)$ bound.  In contrast, for highly noncommutative families, sharper versions of the non-commutative Khintchine inequality \citep{BandeiraBoedihardjoVanHandel2023} suggest that random signs can already be close to optimal in many regimes.  In fact, the noncommutative Khintchine inequalities motivate the following variance-sensitive Matrix Spencer conjecture, which also appears as Remark 4.25 in the problem collection shared by~\citet{bandeira2016}. 
 \begin{conjecture}[Variance-sensitive Matrix Spencer]
 \label{conj:var}
 There exists a universal constant $C$, such that for any choice of $n$ symmetric matrices $A_{1}, \dots, A_{n} \in \mathbb R^{n\times n}$ satisfying $\|A_i\|_{\op}\le 1$ there exists a coloring $x\in\{\pm1\}^{n}$, such that
\begin{equation}
\label{eq:2}
\Bigl\|\nlsum_{i=1}^n x_i A_i\Bigr\|_{\op} \le C \Bigl\|\nlsum_{i=1}^n A_i^2\Bigr\|_{\op}^{1/2}.
\end{equation}
\end{conjecture}
Conjecture~\ref{conj:var} is a strengthening of  Matrix Spencer conjecture, which can be obtained by upper bounding the variance $\left\|\sum_{i=1}^n A_i^2\right\|_{\op}^{1/2}$ by $\sqrt{n}$. 
For special cases such as  rank-1 matrices, the validity of this conjecture has been shown~\citep{KyngLuhSong2020FourDeviations}. However, we will show here that this conjecture cannot hold in full generality for all matrix families. Our lower bound construction is given by a family of diagonal matrices, we expect that this conjecture may hold if matrices are sufficiently non-commutative.

\subsection{Summary of Results}
We resolve the Matrix Spencer conjecture in the case that $A_1, \dots, A_n$ is a collection of contractions belonging to a finite-dimensional $C^*$-algebra $\cA$ with $\dim_\C (\cA) \le O( n)$. 
\begin{theorem}[Algebraic Matrix Spencer] \label{thm:algebraic:spencer}
    Let $\cA$ be a finite-dimensional $C^*$-algebra of dimension $\dim_\C (\cA) \le O( n)$ and $A_1, \dots, A_n\in \cA$ have $\norm{A_i}\le 1$. Then there exists $x\in\{\pm1\}^{n}$ such that $\norm{\sum_{i=1}^n x_i A_i} \le O(\sqrt n)$.
\end{theorem}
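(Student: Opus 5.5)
By Artin--Wedderburn, every finite-dimensional $C^*$-algebra is $*$-isomorphic to a direct sum $\cA\cong\bigoplus_{k=1}^m M_{d_k}(\C)$. The isomorphism is isometric in operator norm, and the norm of an element of $\cA$ is the maximum of the norms of its blocks. The dimension condition reads $\sum_k d_k^2=\dim_\C\cA\le Cn$. Write $A_i=\bigoplus_k A_i^{(k)}$ with $\norm{A_i^{(k)}}\le 1$. The goal then becomes a single sign vector $x$ with $\max_k\norm{\sum_i x_iA_i^{(k)}}\le O(\sqrt n)$. This is a block-diagonal Matrix Spencer problem whose total "entry budget" is $\sum_k d_k^2=O(n)$, rather than the $n^2$ of the general conjecture.

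The plan is the partial-coloring framework in the style of Spencer, Gluskin and Giannopoulos, or Rothvoss's convex-body version. It suffices to prove a partial coloring lemma: for any $t\le n$ active variables there is $y\in[-1,1]^t$ with at least $t/2$ coordinates in $\{\pm1\}$ and
\[
\max_k\Bigl\|\nlsum_i y_iA_i^{(k)}\Bigr\|\le c\sqrt{t}\,\log^{1/2}\!\bigl(2Cn/t\bigr)^{\,}
\]
or a bound of similar shape. Iterating on the uncolored coordinates gives a geometric sum $\sum_j\sqrt{n2^{-j}}\sqrt{j}=O(\sqrt n)$. To prove the lemma, consider the symmetric convex body
\[
K=\Bigl\{y\in\R^t:\ \bigl\|\nlsum_iy_iA_i^{(k)}\bigr\|\le \lambda_k\ \forall k\Bigr\}.
\]
By Rothvoss's theorem, or Gluskin's volume argument, it suffices to show that $K$ has Gaussian measure at least $e^{-t/10}$. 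We can then intersect with the cube and find a point with many $\pm1$ coordinates. The measure bound will come from Gaussian correlation (Royen), which gives $\gamma(K)\ge\prod_k\gamma(K_k)$, where $K_k$ is the slab-like body for block $k$. For each block, noncommutative Khintchine together with Gaussian concentration of the Lipschitz function $g\mapsto\norm{\sum g_iA_i^{(k)}}$ gives
\[
\gamma(K_k)\ge 1-2d_k\exp\bigl(-\lambda_k^2/(2t)\bigr),
\]
using the variance bound $\norm{\sum_i (A_i^{(k)})^2}\le t$. For blocks of very large dimension we can instead use the trivial lower bound $\gamma(K_k)\ge e^{-O(d_k^2\cdots)}$ from a small-ball estimate. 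The natural alternative is to bound the Hilbert--Schmidt norm $\norm{\cdot}_\F\le\sqrt{d_k}\,$ and use the Frobenius ball: the map $y\mapsto\sum y_iA_i^{(k)}$ takes values in a space of real dimension at most $2d_k^2$, so restricting $y$ to a subspace of codimension $\sum_k 2d_k^2$ kills large blocks. That route needs $\sum d_k^2\ll t$, which fails late in the iteration, so it must be combined with the measure argument.

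The main obstacle is the bookkeeping that makes $\prod_k\gamma(K_k)\ge e^{-t/10}$ with all $\lambda_k\le O(\sqrt t)$, uniformly over the block sizes. I would choose $\lambda_k=c\sqrt{t\log(2+d_k^2/\alpha_k)}$, with weights $\alpha_k$ and a dyadic grouping of the blocks by $d_k$. Blocks with $d_k\lesssim$ constant contribute like Spencer's classical entropy count, where there are at most $O(n)$ of them. Large blocks are few, since $\#\{k:d_k\ge D\}\le Cn/D^2$, so each can be assigned a weaker per-block measure of roughly $e^{-O(d_k^2\, t/n)}$, coming from a small-ball or degenerate-Gaussian estimate on a $2d_k^2$-dimensional image. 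The total cost is then $\sum_k d_k^2\, t/n\le Ct$. The step I expect to be delicate is the small-ball estimate for a large block at radius $\sqrt t$ when $d_k^2\gg t$. There I would first try projecting $y$ onto the span of the uncolored coordinates and using that the image of the Gaussian $\sum g_iA_i^{(k)}$ has operator norm $O(\sqrt t)$ with probability at least $e^{-O(d_k^2 t/n)}$, bounding this via covering numbers of the operator-norm ball in a $d_k^2$-dimensional space. If the constants do not close, I would fall back to the first-stage linear-algebra trick: choose a fractional coloring annihilating all large blocks exactly. This is possible for as long as $\sum_{d_k\ge D}2d_k^2\le t/4$, and it handles the late stages.
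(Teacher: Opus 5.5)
Your Wedderburn reduction to Hermitian blocks with $\sum_k d_k^2\le Cn$ is correct and matches the paper's first step. You should also add the Hermitian dilation for non-self-adjoint $A_i$, as the paper does.

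\textbf{The gap.} The proof breaks at the per-block Gaussian estimate for large blocks, and that is where all the difficulty lives.
\begin{itemize}
\item The matrix Gaussian tail bound needs $\lambda_k\gtrsim\sqrt{t\log d_k}$. For $d_k\asymp\sqrt n$ at $t\asymp n$ this is $\sqrt{n\log n}$, which is useless.
\item Your replacement, ``operator norm $O(\sqrt t)$ with probability at least $e^{-O(d_k^2t/n)}$'', is false in the form your bookkeeping needs when $t\ll d_k^2$. Take $d_k^2\asymp n$, $t\asymp n^{1/4}$, and active $A_i^{(k)}$ diagonal with random $\pm1$ entries. This block is a random Spencer instance with $t$ elements and $d_k\asymp t^2$ sets, so its partial colorings have discrepancy $\Omega(\sqrt{t\log(d_k/t)})$. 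By Theorem~\ref{thm:partial-coloring}, the measure at radius $O(\sqrt t)$ must then lie below $2^{-at}$. But this block alone must meet the whole budget.
\item The per-block radius has to grow with $d_k^2/t$; Dadush--Jiang--Reis give $\sqrt{t\log(e+d_k^2/t)}$. Even at that radius, the dimension-counting estimate you suggest cannot deliver the bound. It works in the $D=\min(t,d_k^2)$-dimensional image, using $\gamma(\epsilon L)\ge\epsilon^D\gamma(L)$ with $\epsilon\asymp(\log d_k)^{-1/2}$ relative to the Khintchine scale. That gives only $\exp(-cD\log\log d_k)$, i.e.\ $e^{-cn\log\log n}$ at the first stage.
\item The fallback does not rescue the early stages. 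Take $\cA=M_d(\C)$ with $d^2=C_0n\ge n$. The map $y\mapsto\sum_iy_iA_i$ can then be injective, and there is one block, so Gaussian correlation buys nothing. The first-stage requirement $\gamma_n(K)\ge 2^{-an}$ at radius $O(\sqrt n)$ is exactly Matrix Spencer for $d\times d$ matrices with $d\asymp\sqrt n$. That is a genuine theorem (Dadush--Jiang--Reis), not a small-ball fact.
\end{itemize}

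\textbf{What the paper does instead.} The missing ingredient is a matrix-specific entropy estimate with an adjustable budget, plus a way to split that budget across scales.
\begin{itemize}
\item Lemma~\ref{lem:one-stratum-net}, built on the DJR equal-block net, handles one stratum with largest block $h$ and total dimension $m$. It gives a relative-entropy net of the block-diagonal spectraplex of size $e^{C\tau}$ and error $C\log(e+hm/\tau)$.
\item Lemma~\ref{lem:product-strata-net} glues these nets over the $O(\log n)$ dyadic strata. It discretizes the trace masses $\alpha_j$ of the dual witness $X$ and uses budgets $\tau_j=s\beta_j$. Jensen's inequality then turns the errors into $\log(e+Q/s)$ with $Q\le 2\sum_k d_k^2=O(n)$.
\item This single net of size $e^{O(s)}$ is converted, via Theorem~\ref{thm:renet-covering} and polar covering, into a Gaussian lower bound for the \emph{whole} discrepancy body. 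Rothvoss's theorem then yields partial colorings of cost $O(\sqrt{s\log(e+n/s)})$, which sum to $O(\sqrt n)$.
\end{itemize}
So the budget is allocated at the level of nets, according to where the dual witness puts its trace. It is not done by fixing radii block by block and multiplying measures.

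Your Gaussian-correlation scheme would instead need, for every $d\times d$ block and every budget $q\le t$, a bound $\gamma_t\ge e^{-q}$ at radius about $\sqrt{t\log(e+d^2/q)}$. That is a tunable single-block Matrix Spencer estimate which none of your tools provides. The covering-to-measure principle (Theorem~\ref{thm:covering-measure}) only yields bounds of the form $e^{-O(t)}$, and those cannot be multiplied over many blocks.
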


Our result captures regimes in which the ambient dimension is not the right  measure of complexity for discrepancy. Standard Matrix Spencer-type bounds depend on the dimension of the matrices and therefore give a looser estimate. In contrast, our theorem depends only on the dimension of the $C^*$-algebra generated by the matrices. Thus, whenever the matrices generate a low-dimensional algebra inside a much larger matrix algebra, our bound captures a novel improvement over the standard Matrix Spencer-type bound.


As an important special case, our main result also implies the Group Spencer conjecture, which was posed in a wonderful blog post by ~\citet{Bandeira24MatrixDiscrepancyBlog}.
\begin{corollary}[Group Spencer]
    Let $G$ be a finite group of order $n$. There exist signs $x_g \in\{\pm1\} $ for each $g\in G$ such that for any unitary representation $\rho:G\to \text{GL}(n)$ we have $\|\sum_{g\in G} x_g \rho(g)\| \le O(\sqrt n)$.
\end{corollary}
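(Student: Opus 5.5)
We derive the corollary from Theorem~\ref{thm:algebraic:spencer} applied to the group algebra. Let $\cA = \C[G]$, equipped with its natural $C^*$-structure: the involution is $g^* = g^{-1}$, and the norm is the operator norm of the left regular representation $\lambda$ on $\ell^2(G)$. Then $\dim_\C(\cA) = |G| = n$, so the dimension hypothesis holds with constant $1$.

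Take $A_g = \lambda(g)$ for $g \in G$. Each $A_g$ is a permutation unitary, so $\norm{A_g} = 1$. These are exactly $n$ contractions in $\cA$, one for each group element. Theorem~\ref{thm:algebraic:spencer} then gives signs $x \in \{\pm1\}^G$ with
\begin{equation*}
\Bigl\|\nlsum_{g} x_g \lambda(g)\Bigr\| \le C\sqrt n .
\end{equation*}
The signs do not depend on any representation, because they are chosen inside the group algebra.

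Next we transfer this bound to an arbitrary unitary representation $\rho$. Any such $\rho$, including one into $\mathrm{GL}(n)$ or any finite dimension, is a direct sum of irreducible unitary representations $\pi$, up to unitary conjugation. Hence
\begin{equation*}
\Bigl\|\nlsum_g x_g\rho(g)\Bigr\| = \max_{\pi \subset \rho} \Bigl\|\nlsum_g x_g\pi(g)\Bigr\| .
\end{equation*}
By Peter--Weyl (Maschke plus Wedderburn for finite groups), every irreducible $\pi$ occurs as a summand of $\lambda$. Therefore, for every irreducible $\pi$,
\begin{equation*}
\Bigl\|\nlsum_g x_g\pi(g)\Bigr\| \le \Bigl\|\nlsum_g x_g\lambda(g)\Bigr\| .
\end{equation*}
Combining the two displays gives $\norm{\sum_g x_g\rho(g)} \le C\sqrt n$ for every unitary representation $\rho$, uniformly.

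The main point to get right is this uniformity over all representations. It works because $\lambda$ contains every irreducible representation. Equivalently, the $C^*$-norm on $\C[G]$ is unique, so it dominates the norm of the image under every $*$-representation. No step is a real obstacle beyond checking that the theorem's norm is the $C^*$-norm of $\cA$, which holds by the identification above.
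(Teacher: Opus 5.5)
Your proposal is correct and follows essentially the same route as the paper: apply Theorem~\ref{thm:algebraic:spencer} to the left regular representation inside the $|G|$-dimensional $C^*$-algebra it generates, then pass to an arbitrary unitary representation by decomposing it into irreducibles, each of which occurs in $\lambda$. The only differences are cosmetic. You read off $\dim_\C\cA=|G|$ directly from $\C[G]$, using faithfulness of $\lambda$, rather than via the Peter--Weyl count $\sum_\pi d_\pi^2$. You also note the equivalent viewpoint that $*$-homomorphisms out of a finite-dimensional $C^*$-algebra are contractive.
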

\begin{proof}
    Let $\rho:G\to \text{GL}(n)$ denote the left regular representation of $G$. Consider the $C^*$-algebra $\cA$ generated by $\{\rho(g): g\in G\}$. By Peter--Weyl there exists a unitary $U$ such that,
    \begin{equation}
    \label{eq:weyl}
    \rho(g) = U (\bigoplus_{\pi\in\hat G} I_{d_\pi}\otimes \pi) U^*,
    \end{equation}
   where $\hat G$ denotes  the set of isomorphism classes of irreducible representations of $G$ and $d_\pi$ denote the degree of $\pi\in\hat{G}$.
    Consequently, we also get $\cA \cong \bigoplus_{\pi\in\hat{G}}M_{d_\pi}(\C)$ and therefore also 
    $\dim_\C (\cA) = \sum_{\pi\in\hat{G}}d_\pi^2=|G|$. By Theorem \ref{thm:algebraic:spencer}, applied to the family $\{\rho(g)\:g\in G\}$, there exists signs $x_g\in\{\pm 1\}$ such that 
    \begin{equation}
    \label{eq:3}
      \Bigl\| \sum_{g\in G} x_g \rho(g)\Bigr\| \le O(\sqrt{|G|}).  
    \end{equation}
    Using the block-decomposition~\eqref{eq:weyl} we have the identification
    \begin{equation}
    \label{eq:4}
        \sum_{g\in G} x_g \rho(g) \cong \bigoplus_{\pi\in\hat G} I_{d_\pi}\otimes ( \sum_{g\in G} x_g \pi(g) ).
    \end{equation}
    Now take operator norms in~\eqref{eq:3} and use inequality~\eqref{eq:3} to obtain
    \begin{equation}
    \label{eq:5}
    \max_{\pi\in\hat{G}} \Bigl\| \sum_{g\in G} x_g \pi(g) \Bigr\| =  \Bigl\| \sum_{g\in G} x_g \rho(g)\Bigr\| \le O(\sqrt{|G|}).
    \end{equation}
    Now, let $\rho:G\to \text{GL}(n)$ be any finite-dimensional unitary representation of $G$, and use complete reducibility to obtain the  decomposition $\rho \cong \bigoplus_{\pi\in\hat G} I_{m_\pi}\otimes \pi$. With this decomposition repeat the above steps from~\eqref{eq:3}--\eqref{eq:5} to finally obtain
    $\bigl\| \sum_{g\in G} x_g \rho(g)\bigr\| \le O(\sqrt{|G|})$. 
\end{proof}

\subsection{Extension to a bigger class of matrices}
A further (minor) contribution of this work is to combine the algebraic method  with the existing rank-constrained Matrix Spencer results, giving new families of possibly full-rank matrices for which the Matrix Spencer conjecture holds. In particular, we prove a hybrid theorem for decompositions
\[
A_i=B_i+L_i,
\]
where the \(B_i\)'s lie in a finite-dimensional \(C^*\)-algebra of dimension \(O(n)\), while the \(L_i\)'s satisfy the  Frobenius/rank hypotheses of Bansal--Jiang--Meka \cite{BansalJiangMeka2024MatrixSpencer}.

\begin{theorem}[Matrix Spencer for rank-constrained perturbations of algebraic families]
\label{thm:low-rank-plus-algebra}
For every \(C_0\ge 1\), there is a constant \(C=C(C_0)\) with the following property. Let
\[
A_i=B_i+L_i,
\qquad
i=1,\ldots,n,
\]
be self-adjoint matrices. Assume that \(B_i\in\mathcal A\) for a finite-dimensional \(C^*\)-algebra \(\mathcal A\) satisfying $\dim_{\mathbb C}\mathcal A\le C_0n$, and assume $\|B_i\|\le 1$, $\|L_i\|\le 1$, and $\|L_i\|_F^2\le \frac{n}{\log^3(en)}$ for every \(i\in[n]\). Then, there exist signs \(x_1,\ldots,x_n\in\{\pm1\}\) such that
\[
\Bigl\|
\nlsum_{i=1}^n x_iA_i
\Bigr\|
\le
C(C_0)\sqrt n.
\]
\end{theorem}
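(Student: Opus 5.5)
We prove the hybrid theorem by a combined partial-coloring argument, not by applying Theorem~\ref{thm:algebraic:spencer} and the Bansal--Jiang--Meka theorem separately. Coloring the $B_i$ and the $L_i$ independently would give two incompatible sign vectors. Instead we run a single partial-coloring framework, in the style of Gluskin/Giannopoulos or the Lovett--Meka/Rothvoss convex-body version. Both parts then enter as constraints on one fractional coloring.

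Concretely, we view each $A_i$ as living in $\mathcal A\oplus M_N(\C)$, with $B_i$ in the first summand and $L_i$ in the second. Write $\mathcal A\cong\bigoplus_k M_{d_k}(\C)$ with $\sum_k d_k^2\le C_0 n$. Then $\|\sum x_iA_i\|\le\|\sum x_iB_i\|+\|\sum x_iL_i\|$, and it suffices to control both sums simultaneously. We iterate a partial coloring lemma $O(\log n)$ times. At each stage the set of alive coordinates $S$ has size $m$, and we seek $y\in[-1,1]^S$ with a constant fraction of entries in $\{\pm1\}$ and
\[
\Bigl\|\nlsum_{i\in S}y_iB_i\Bigr\|\le C\sqrt{m}\,\phi_1(n/m),\qquad \Bigl\|\nlsum_{i\in S}y_iL_i\Bigr\|\le C\sqrt{m}\,\phi_2(n/m).
\]
The functions $\phi_1,\phi_2$ must be chosen so that $\sum_t \sqrt{n2^{-t}}\,\phi(2^t)=O(\sqrt n)$, for instance $\phi(s)=\log^{c}(es)$.

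The argument has four steps.

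\begin{enumerate}
\item \textbf{Gaussian measure of the algebraic body.} We show that $K_B=\{y:\|\sum y_iB_i\|\le \lambda\sqrt m\}$ has Gaussian measure at least $e^{-c m}$ for an appropriate $\lambda=\lambda(n/m)$. This is exactly the algebraic input behind Theorem~\ref{thm:algebraic:spencer}. The point is that $y\mapsto\sum y_iB_i$ lands in a real space of dimension at most $2C_0n$. Covering the unit ball of $(\mathcal A,\|\cdot\|)$ by a net of size $e^{O(n)}$ then gives the small-ball/volume estimate, via a dual Sudakov or Kanter type argument applied in the algebra rather than in $M_N$. We take this measure bound from the proof of Theorem~\ref{thm:algebraic:spencer} as given.

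\item \textbf{Gaussian measure of the low-rank body.} For $K_L=\{y:\|\sum y_iL_i\|\le\mu\sqrt m\}$ we use the Bansal--Jiang--Meka estimate. Its hypothesis $\|L_i\|_F^2\le n/\log^3(en)$ yields measure at least $e^{-cm}$ for suitable $\mu$ on the relevant scales, through their Schatten-$p$/Frobenius-controlled matrix concentration.

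\item \textbf{Combining the two bodies.} Both bodies are symmetric and convex. By Gaussian correlation (Royen), $\gamma(K_B\cap K_L)\ge\gamma(K_B)\gamma(K_L)\ge e^{-2cm}$. Rothvoss's partial coloring theorem, or the Reis--Rothvoss version allowing measure $e^{-cm}$ with small $c$, then produces a partial coloring in $(K_B\cap K_L)\cap[-1,1]^S$. To meet the smallness requirement on $c$, we enlarge $\lambda$ and $\mu$ by constant factors.

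\item \textbf{Iteration.} We iterate over the shrinking sets $S$ and sum the resulting geometric-type series to conclude.
\end{enumerate}

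The main obstacle is the scale dependence as $m$ shrinks. The algebra bound must hold with $\dim\mathcal A\le C_0n$ but only $m\ll n$ alive vectors, so the natural bound becomes $\sqrt{m\log(en/m)}$ rather than $\sqrt m$. This is still summable, since $\sum_t\sqrt{n2^{-t}\,t}=O(\sqrt n)$. Similarly, the BJM Frobenius condition must be re-verified at each scale, where their proof supplies a bound like $\sqrt{m}\,\mathrm{polylog}(n/m)$. We would first try to reproduce the BJM stage-by-stage bounds verbatim and intersect them with the algebraic body. The measure product costs only a constant in the exponent, so the main check is that the constants in Rothvoss's lemma tolerate this loss; rescaling $\lambda$ and $\mu$ by a $C_0$-dependent factor should absorb it, giving $C=C(C_0)$.
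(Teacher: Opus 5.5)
Your Step~2 claims something the Bansal--Jiang--Meka (BJM) argument does not provide, and Step~3 depends on it. BJM do not lower-bound the Gaussian measure of $K_L=\{y\in\R^S:\|\sum_{i\in S}y_iL_i\|\le\mu\sqrt m\}$ in all of $\R^S$.

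Their bound on $\|\sum_i g_iL_i\|$ comes from the Bandeira--Boedihardjo--van Handel inequality. Its error term is governed by the covariance parameter, which is the top eigenvalue of the Gram matrix $(\Tr(L_iL_j))_{i,j\in S}$. The Frobenius hypothesis only controls the \emph{trace} of that matrix, which is at most $mn/\log^3(en)$. So the covariance parameter becomes small only after one discards the top $\eta m$ eigendirections. What is actually available (Lemma~\ref{lemma:bjm-package}) is this: a subspace $H_S\subseteq\R^S$ with $\dim H_S\ge(1-\eta)m$ and $\gamma_{H_S}(K_L\cap H_S)\ge e^{-\eta m}$, at scale $C(\sqrt m+n^{1/4}m^{1/4})$.

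Nothing controls the discarded high-variance directions. There, the crude bound $\|\sum_i w_iL_i\|\le\|w\|_1$ only yields a lower bound of order $e^{-\Theta(m\log m)}$. So the inequality $\gamma(K_L)\ge e^{-cm}$ that you feed into Royen's inequality and the full-space Rothvoss theorem is unjustified.

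The missing idea is to carry out the whole partial-coloring step inside $H_S$, as the paper does. You need three things:
(i) The algebraic body must survive the restriction, i.e.\ $\gamma_{H_S}(K_B\cap H_S)\ge e^{-Cm}$. The paper proves this in Lemma~\ref{lem:algebraic-body-subspace-stability} by projecting the polar cover, using $(K\cap H)^\circ_H=\mathrm{Proj}_H(K^\circ)$. It also follows directly from your Step~1, since log-concavity and symmetry give $\gamma_H(K\cap H)\ge\gamma(K)$ for symmetric convex $K$.
(ii) Gaussian correlation must be applied inside $H_S$.
(iii) You need Rothvoss's partial coloring \emph{relative to a subspace} (Theorem~\ref{thm:partial-coloring-relative}). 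It tolerates codimension $\eta m$ and still fixes $cm$ coordinates, once you rescale both bodies by constants so the joint measure is at least $e^{-\eta m}$.

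With these changes your iteration and summation go through. Note that the BJM per-stage loss is the polynomial factor $(n/m)^{1/4}$ rather than a polylog; it is still summable.

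Separately, your Step~1 heuristic is not the actual mechanism. A net of size $e^{O(n)}$ is far too large once $m\ll n$, because you need polar covering numbers $e^{O(m)}$; this is why the paper uses multiscale relative-entropy nets. Citing the measure bound from the proof of Theorem~\ref{thm:algebraic:spencer}, as you do, is fine.
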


\subsection{Discussion and Related Work}
A substantial line of work has treated various cases of the  Matrix Spencer conjecture.  \cite{hopkins2022matrix} proved the conjectured $O(\sqrt n)$ bound for matrices of rank at most $n^{1/4}$, establishing a novel connection between matrix discrepancy and    quantum communication complexity. \cite{dadush2022matrix} developed a partial-coloring framework via mirror descent, obtaining low-rank and block-diagonal matrix discrepancy bounds.  \cite{BansalJiangMeka2024MatrixSpencer} later proved Matrix Spencer conjecture for matrices of rank at most $n/\log^3 n$. Their proof exploits the close connection of matrix discrepancy and random matrix theory; in particular it employs the sharp noncommutative Khintchine inequalities of \cite{BandeiraBoedihardjoVanHandel2023} within a very clever decomposition argument.

This paper studies a complementary source of structure. Instead of assuming low rank for the individual matrices, we assume that the matrices lie in a finite-dimensional \(C^*\)-algebra of low intrinsic dimension. The relevant dimension for discrepancy is not the ambient Hilbert-space dimension. Indeed, by  Wedderburn,  $ \cA  \cong 
\bigoplus_{\alpha\in\Lambda} I_{r_\alpha}\otimes M_{d_\alpha}(\mathbb C)$ and any element in the algebra has the form \(A_i=\bigoplus_\alpha I_{r_\alpha}\otimes A_i^{(\alpha)}\), so that every signed sum then satisfies
\[
\bigl\|\nlsum_i x_iA_i\bigr\|
=
\max_{\alpha}\ 
\bigl\|\nlsum_i x_iA_i^{(\alpha)}\bigr\|.
\]
Thus,  one can observe that the multiplicities \(r_\alpha\) do not have an effect to the operator norm, and the discrepancy is governed by the intrinsic block dimensions \(d_\alpha\), not by the ambient dimension of the Hilbert space.

\vskip4pt
\noindent\textbf{Concurrent Work.}
In a concurrent and independent work~\citet{BandeiraBolcskei2026MatrixDiscrepancy} also resolved the Group Spencer conjecture as in Corollary 1.4 using different techniques based on random matrix theory.
\section{Preliminaries}
\label{sec:preliminaries}
All Hilbert spaces in this work are finite-dimensional.  We write $\norm{\cdot}$ for the
operator norm and $\norm{\cdot}_{\F}$ for the Frobenius norm.  For
self-adjoint $M$, the norm is dual to trace norm:
\[
  \norm{M}=\sup\{\abs{\Tr(MX)}:\norm{X}_1\le1,\ X=X^*\}.
\]
For density matrices (i.e., positive semidefinite and unit trace) $X,Y$, the quantum relative entropy is defined as
\begin{equation}
\label{eq:kl}
  \KL{X}{Y}:=\Tr\bigl(X(\log X-\log Y)\bigr).
\end{equation}

\subsection{Partial Coloring}
We recall the standard notion of a partial coloring. 

\begin{definition}[Partial coloring]
A partial coloring of the matrices $A_1, \dots, A_n$ with discrepancy $\Delta>0$ is a vector $x\in[-1,1]^n$ such that $|x_i| = 1$ for a constant fraction of its coordinates $i\in[n]$, and $\|\sum_{i=1}^nx_i A_i\|\le \Delta$.
\end{definition}
For a collection of matrices $A_1, \dots, A_n$ we denote by
\begin{equation}
\label{eq:cvxbdy}
K  = \{ x\in \mathbb R^{n}: \big\|\nlsum_{i=1}^n x_i A_i\bigr\| \leq \Delta  \},
\end{equation}
the convex body of partial fractional colorings with discrepancy at most $\Delta$. To show the existence of \emph{partial coloring}  in $K$  with a constant fraction of its coordinates being integral, it suffices to show that $K$ has sufficiently large Gaussian volume $\gamma(K)\geq \exp(-\Omega(n))$ (\cite{Gluskin1989Extremal, Giannopoulos1997VectorBalancing}). We use a variant of the Gaussian partial-coloring theorem due to Rothvoss~\cite[Lemma~9]{Rothvoss2017Constructive}

\begin{theorem}[Gaussian partial coloring]
\label{thm:partial-coloring}
There are absolute constants $a$,$c$, and $C>0$ with the following property. Let $K\subset\R^s$ be convex body.  If $\gamma_s(K)\ge 2^{-as}$, then for every point $y\in(-1,1)^s$ there is an increment
$x\in C K\cap[-1,1]^s$ such that $x+y\in[-1,1]^s$ and at least
$c\cdot s$ coordinates of $x+y$ lie in $\{\pm1\}$.
\end{theorem}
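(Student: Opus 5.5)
We will not reprove this from scratch: it is Lemma~9 of \cite{Rothvoss2017Constructive}, and we would only check that its hypotheses match. For completeness, here is how we would reconstruct it. As in Rothvoss, we assume $K$ is symmetric, which holds for the body \eqref{eq:cvxbdy}. The plan is his projection argument.

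\emph{Setup.} Set $Q:=\{x\in\R^s:\ -1\le x_i+y_i\le 1\ \forall i\}$. This is an intersection of $s$ slabs $\{-1-y_i\le x_i\le 1-y_i\}$, each containing $0$ in its interior. Draw $g\sim N(0,I_s)$ and let $x^*$ be the Euclidean projection of $g$ onto the convex set $CK\cap Q$. Then $x^*\in CK$ and $x^*+y\in[-1,1]^s$. The only thing to show is that, with positive probability, at least $cs$ of the slab constraints are tight at $x^*$; each tight constraint gives a coordinate of $x^*+y$ in $\{\pm1\}$. Note that $x^*+y\in[-1,1]^s$ and $y\in(-1,1)^s$ give $|x^*_i|<2$, so at worst a harmless constant is absorbed (or one intersects with the box, as Rothvoss does).

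\emph{Step 1: the projection is close to $g$.} Since $\gamma_s(K)\ge 2^{-as}$, Gaussian isoperimetry gives $\mathrm{dist}(g,K)\le O(\sqrt{a}\sqrt s)$ with probability $1-e^{-\Omega(s)}$. Enlarging to $CK$ with $C$ large makes this distance at most $\varepsilon\sqrt s$ for any desired small $\varepsilon$. We must also intersect with $Q$. Each slab contains $0$, so by \v{S}id\'ak's lemma for symmetric slabs, plus a comparison for the shifted slabs, $\gamma_s(CK\cap Q)\ge e^{-O(s)}$. Hence $\|g-x^*\|\le \delta\sqrt s$ with high probability.

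\emph{Step 2: few tight constraints force a large distance.} Suppose the set $I$ of tight slabs satisfies $|I|<cs$. By the KKT conditions, $g-x^*$ lies in the normal cone of $CK\cap Q$ at $x^*$, which is spanned by the normal cone of $CK$ at $x^*$ and $\{\pm e_i: i\in I\}$. Thus $x^*$ is also the projection of $g$ onto $CK\cap\bigcap_{i\in I}(\text{slab}_i)$. We then union bound over the at most $\binom{s}{cs}\le 2^{H(c)s}$ choices of $I$. For a fixed $I$, the point $g$ must lie within $\delta\sqrt s$ of $CK$ while also being so far outside the slabs $i\notin I$ that their constraints are loose yet $x^*$ stays near their boundaries; equivalently, $g$ lies in a set whose Gaussian measure is at most $e^{-\Omega(s)}$, with a constant beating $2^{H(c)s}$ once $c$ is small.

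\emph{Main obstacle.} Step 2 is the delicate step. Rothvoss's cleanest route is the observation that if few constraints are tight, then $\|g-x^*\|$ is, in distribution, at least the distance from $g$ to the convex body $CK\cap\{|x_i|\le 1\ \forall i\in I\}$ in the remaining directions. Since the coordinates outside $I$ are unconstrained by the box, that distance concentrates at $\Omega(\sqrt s)$ unless $g$ lies in a set of tiny measure. We would follow that argument, choosing $a$ and $c$ small and $C$ large so that the union bound closes.
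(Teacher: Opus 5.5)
Citing Rothvoss's Lemma~9 is exactly what the paper does; it gives no proof of its own. Your remark that $K$ must be symmetric is correct and necessary. The statement as printed omits symmetry, and it fails for bodies away from the origin: for instance $K=\{x\in[-R,R]^s:\ x_1\ge 2/C\}$ with $R$ large has $\gamma_s(K)\ge 2^{-as}$ for large $s$, yet $CK\cap[-1,1]^s=\emptyset$.

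The reconstruction you sketch, however, would not prove the statement, because its central mechanism runs backwards.

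\emph{Step~1 is false.} Since $x^*\in Q$, we have $\|g-x^*\|\ge d(g,Q)$. As $Q$ is a product of length-$2$ intervals containing $0$, Anderson's inequality gives $\mathbb E\,d(g,Q)^2\ge s\,\mathbb E(|g_1|-1)_+^2\approx 0.15\,s$. Hence $\|g-x^*\|\ge c_1\sqrt s$ with probability $1-e^{-\Omega(s)}$, whatever $C$ is, so it is never at most $\delta\sqrt s$ for small $\delta$.

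\emph{Step~2 is reversed.} Write $Q_I:=\bigcap_{i\in I}\mathrm{slab}_i$. If the tight set $I$ is small, then $\|g-x^*\|=d(g,CK\cap Q_I)$ exactly. Leaving the coordinates outside $I$ unconstrained makes this body \emph{large}, hence \emph{close} to $g$, not far from it.

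Rothvoss's argument (for $y=0$) plays these two facts against each other. By \v{S}id\'ak, $\gamma_s(K\cap\{|x_i|\le1\ \forall i\in I\})\ge 2^{-as}\gamma_1([-1,1])^{|I|}\ge e^{-(a+0.4c)s}$. Gaussian isoperimetry then gives $d(g,K\cap Q_I)\le(\sqrt{2(a+0.4c)}+\theta)\sqrt s$ outside an event of probability $e^{-\theta^2s/2}$. Take a union bound over the at most $e^{H(c)s}$ sets $I$ with $|I|<cs$, choosing $a,c$ small so that $\theta^2/2>H(c)$ and $\sqrt{2(a+0.4c)}+\theta<c_1$. This contradicts the lower bound $\|g-x^*\|\ge c_1\sqrt s$. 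Here $C=1$ already suffices.

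Your handling of the shift also fails. \v{S}id\'ak's lemma needs symmetric slabs, and the ``comparison for the shifted slabs'' you invoke is false. Take $K=\{x:|\sum_i x_i|\le\sqrt s\}$ and $y=(1-\tfrac1s)\mathbf 1$. Every $x\in CK\cap Q$ has $x_i\le 1/s$ and $\sum_i(1/s-x_i)\le 1+C\sqrt s$, so $\gamma_s(CK\cap Q)\le (O(C)/\sqrt s)^s$, superexponentially small.

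One clean reduction to $y=0$ goes as follows.
Put $D=\mathrm{diag}(d_i)$ with $d_i=\operatorname{sgn}(y_i)(1-|y_i|)$, taking $\operatorname{sgn}0=1$. Since $D^2\preceq I$, Anderson's inequality gives $\gamma_s(D^{-1}K)\ge\gamma_s(K)$. The unshifted result then yields $z\in C D^{-1}K\cap[-1,1]^s$ with at least $cs$ coordinates in $\{\pm1\}$. After replacing $z$ by $-z$ if needed, at least $cs/2$ of them equal $+1$. Set $x:=Dz$. Then $x\in CK\cap[-1,1]^s$, $x+y\in[-1,1]^s$, and $|x_i+y_i|=1$ whenever $z_i=1$.

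This reduction also delivers the required membership $x\in[-1,1]^s$, which ``absorbing a constant'' cannot. Rescaling $x$ changes $x+y$, and intersecting with the box adds tight constraints $x_i=\pm1$ that are not colorings.
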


Moreover, we will also need the version of the Gaussian partial coloring theorem relative to a subspace, which is also due to \cite{Rothvoss2017Constructive}.

\begin{theorem}[Gaussian partial coloring relative to a subspace]
\label{thm:partial-coloring-relative}
There are absolute constants \(c,\eta,C>0\) with the following property. Let
\(H\subseteq\mathbb R^s\) be a linear subspace satisfying $\dim H\ge (1-\eta)s$, and let \(K\subseteq H\) be a symmetric convex set satisfying $\gamma_H(K)\ge \exp(-\eta s)$.
Then for every \(y\in(-1,1)^s\), there exists an increment \(x\in CK\) such that $y+x\in[-1,1]^s$ and $|\{i\in[s]: |y_i+x_i|=1\}|\ge cs.$
\end{theorem}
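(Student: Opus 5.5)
The plan is to rerun Rothvoss's argument for \Cref{thm:partial-coloring} with all the Gaussian geometry carried out inside $H$. Write $m=\dim H\ge(1-\eta)s$, let $\gamma_H$ be the standard Gaussian measure on $H$, and let $P_H$ be the orthogonal projection onto $H$. Since $y\in(-1,1)^s$, the shifted cube
\[
Q=\{x\in\R^s:\ -1-y_i\le x_i\le 1-y_i \text{ for all } i\}
\]
contains $0$, and $y+x\in[-1,1]^s$ exactly when $x\in Q$. The algorithm draws $g\sim\gamma_H$ and takes $x^*$ to be the Euclidean projection of $g$ onto the convex set $CK\cap Q\cap H=CK\cap Q$ (recall $K\subseteq H$). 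Then $x^*\in CK$ and $y+x^*\in[-1,1]^s$ automatically. It remains to show that, with positive probability, at least $cs$ of the cube constraints are tight at $x^*$.

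\textbf{Step 1: the cube constraints as slabs in $H$.} Restricted to $H$, the constraint $x_i\le 1-y_i$ reads $\langle x,P_He_i\rangle\le 1-y_i$. Its distance from the origin inside $H$ is $(1-y_i)/\norm{P_He_i}$, which is at least $1-y_i$. Moreover,
\[
\sum_i \norm{P_{H^\perp}e_i}^2=\dim H^\perp\le \eta s .
\]
So all but at most $\sqrt\eta\, s$ coordinates satisfy $\norm{P_He_i}^2\ge 1-\sqrt\eta$. Call these coordinates \emph{good}; on them the slabs look essentially like standard coordinate slabs in a space of dimension about $s$.

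\textbf{Step 2: Rothvoss's isoperimetric argument in $H$.} Suppose the set $I$ of tight constraints at $x^*$ has $|I|<cs$. Then $x^*$ is also the projection of $g$ onto $CK\cap\bigcap_{i\in I}\{\text{slab}_i\}$. Following Rothvoss, this forces $g$ to lie within small distance of the body $CK\cap\bigcap_{i\in I}\text{slab}_i$. Two ingredients then bound the Gaussian measure of this body from below by $e^{-O(\eta s)-O(cs\log(1/c))}$:
\begin{itemize}
\item $\gamma_H(CK)\ge \gamma_H(K)\ge e^{-\eta s}$, and scaling by $C$ only increases the measure;
\item each slab has $\gamma_H$-measure bounded below by Step 1, and Šidák/Gaussian correlation lets us multiply these bounds.
\end{itemize}
On the other hand, the full cube restricted to the good coordinates is far from $g$: a Gaussian in $H$ has about $(1-\sqrt\eta)s$ good coordinates, each of constant-order variance, so typically $\Omega(s)$ of them exceed the slab half-widths by a constant factor. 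Hence $\norm{g-x^*}^2\ge\Omega(s)$, whereas the union bound over the $\binom{s}{cs}$ choices of $I$, combined with Gaussian concentration (the $\gamma_H$-measure of a set of measure $e^{-\epsilon s}$, enlarged by radius $O(\sqrt{\epsilon}\sqrt s)$, is at least $1/2$), gives $\norm{g-x^*}^2\le O(\sqrt{\eta+c\log(1/c)}\,s)$ with high probability. Choosing $c,\eta$ small and $C$ large then yields a contradiction.

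\textbf{Main obstacle.} The delicate point is the quantitative comparison in Step 2: the distance lower bound has to hold for the good coordinates only, while the bad coordinates, whose slabs may be very thin in $H$, must be absorbed into the $\sqrt\eta\,s$ slack. I would first try to handle this by discarding the bad coordinates entirely and applying the argument to the good ones alone. This costs only a constant factor in $c$. It also requires that $\eta$ be small compared with the constant $a$ of \Cref{thm:partial-coloring}, so that both the measure bound $e^{-\eta s}$ and the loss of dimension stay within Rothvoss's regime.
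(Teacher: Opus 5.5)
The paper gives no proof of this statement; it is quoted from Rothvoss. Your framework is the right one: draw $g\sim\gamma_H$, project onto $CK\cap Q$, and play the lower bound $\norm{g-x^*}\ge d(g,Q)$ against an upper bound on the distance from $g$ to the body cut out by the tight constraints $I$.

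The gap is the measure estimate in Step 2. The constraints $\{-1-y_i\le x_i\le 1-y_i\}$, and a fortiori the one-sided tight half-spaces, are not symmetric about the origin. So neither \v{S}id\'ak's lemma nor the Gaussian correlation inequality applies to them. The symmetric slabs $\{\abs{x_i}\le 1-\abs{y_i}\}$ they contain have measure tending to $0$ as $\abs{y_i}\to 1$. Step 1 only gives a distance $\ge 1-y_i$, which is not bounded below.

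The claimed bound $e^{-O(\eta s)-O(cs\log(1/c))}$ is in fact false. Take $H=\R^s$, a set $I$ with $\abs{I}=k<cs$, $y_i=1-\epsilon$ for $i\in I$, and $K=\{x:\abs{\sum_{i\in I}x_i}\le\sqrt k\}$, so $\gamma(K)\ge 0.68$. A point of $CK\cap\bigcap_{i\in I}\mathrm{slab}_i$ has $x_i\le\epsilon$ for $i\in I$ and $\sum_{i\in I}x_i\ge -C\sqrt k$. As $\epsilon\to 0$ its measure tends to at most $\Pr[\sum_{i\le k}\abs{g_i}\le C\sqrt k]\le (eC\sqrt{2/\pi}/\sqrt k)^k$, which is $e^{-\Omega(k\log k)}$ once $k\gg C^2$, i.e.\ $e^{-\Omega(cs\log s)}$ for $k\asymp cs$. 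Isoperimetry applied to such a set only controls the distance at radius $\gtrsim\sqrt{cs\log s}\gg\sqrt s$. Your union bound needs the estimate for every $I$ with $\abs{I}<cs$, so Step 2 does not close.

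The missing idea, which is how Rothvoss avoids asymmetric constraints altogether, is to pass to a central section.
\begin{itemize}
\item Since $y\in(-1,1)^s$, the origin lies in every slab. Hence $CK\cap\bigcap_{i\in I}\mathrm{slab}_i\supseteq K\cap H_I$, where $H_I:=\{x\in H: x_i=0\ \forall i\in I\}$ has codimension at most $\abs{I}$ in $H$.
\item For symmetric convex $K$ one has $\gamma_{H_I}(K\cap H_I)\ge\gamma_H(K)\ge e^{-\eta s}$. Write $K_w$ for the fibre of $K$ over $w\in H\ominus H_I$. Symmetry and convexity give $K\cap H_I\supseteq\frac12K_w+\frac12(-K_w)$, so log-concavity yields $\gamma_{H_I}(K\cap H_I)\ge\sup_w\gamma_{H_I}(K_w)\ge\gamma_H(K)$.
\item Then $\norm{g-x^*}\le\norm{P_{H\ominus H_I}g}+d(P_{H_I}g,K\cap H_I)$. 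Chi-square tails and the tail form of Gaussian isoperimetry bound both terms: $\gamma(A+tB_2)\ge 1-e^{-(t-\sqrt{2\epsilon s})^2/2}$ when $\gamma(A)\ge e^{-\epsilon s}$. The ``measure at least $1/2$'' version you quote is too weak for a union bound.
\item A union bound over the $e^{O(c\log(1/c)s)}$ sets $I$ then makes both terms $O\bigl(\sqrt{(\eta+c\log(1/c))s}\bigr)$ simultaneously, with probability $1-e^{-\Omega(s)}$.
\end{itemize}

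This also dissolves your ``main obstacle'': thin slabs never enter the upper bound, so there is nothing to discard. Dropping the cube constraints of bad coordinates from the projection would break $y+x\in[-1,1]^s$, and those coordinates may still lie in the tight set anyway.

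The good/bad split is needed only for the lower bound. There, get $\mathbb E\,d(g,Q)^2\ge\Omega(s)$ from the good coordinates, then use concentration of the $1$-Lipschitz function $g\mapsto d(g,Q)$ on $H$. Counting coordinates does not suffice, because the $g_i$ are correlated. With these changes the argument works with $C=1$.
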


We further recall in this section tools we use from \cite{dadush2022matrix}.
In their work Dadush, Jiang, and Reis recast the problem of proving a Gaussian-measure lower bound of the discrepancy body as a covering-number estimate for the polar body.  We use it only up to absolute rescaling constants.

\begin{theorem}[Covering-to-measure principle, \cite{dadush2022matrix} Lemma 3.3]
\label{thm:covering-measure}
Let $K\subset\R^s$ be symmetric and convex.  The following are equivalent,
\begin{enumerate}
    \item[\textup{(1)}] $N\!\left(K^\circ,\frac1sB_\infty^s\right)\le 2^{O(s)}$,
    \item[\textup{(2)}] $N\left(K^\circ,\frac{1}{\sqrt s}B_2^s\right)\le 2^{O(s)}$,
    \item[\textup{(3)}] $\gamma_s(K)\ge 2^{-O(s)}$.
\end{enumerate}
Moreover, we will use the following parameterized form of the implication \(\textup{(2)}\Rightarrow\textup{(3)}\). If \(K\subseteq\mathbb R^s\) is a symmetric convex body, and for some $r\ge s$, $N\left(K^\circ,\frac{1}{\sqrt s}B_2^s\right)\le 2^{Cr}$, then $\gamma_s(cK)\ge 2^{-C'r}$, where $c,C'>0$ depend only on $C$.
\end{theorem}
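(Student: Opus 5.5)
The plan is to treat the three equivalences separately. The only substantive step is \textup{(2)}$\Rightarrow$\textup{(3)}, which I would prove by a volume argument: Bourgain--Milman applied to a truncated body, followed by a comparison of Lebesgue and Gaussian measure on the ball $\sqrt s\,B_2^s$. I write $N(A,B)$ for the number of translates of $B$ needed to cover $A$.

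\emph{\textup{(1)}$\Leftrightarrow$\textup{(2)}.} Since $\frac1s B_\infty^s\subseteq \frac1{\sqrt s}B_2^s$, we get \textup{(1)}$\Rightarrow$\textup{(2)} immediately. For the converse it suffices to show $N\bigl(\frac1{\sqrt s}B_2^s,\frac1sB_\infty^s\bigr)\le 2^{O(s)}$, because coverings compose multiplicatively. I would use the standard volumetric bound $N(A,B)\le \mathrm{vol}(A+\tfrac12B)/\mathrm{vol}(\tfrac12B)$ for symmetric convex $B$. Here $\frac1{2s}B_\infty^s\subseteq\frac1{2\sqrt s}B_2^s$, so $A+\frac12B\subseteq \frac{2}{\sqrt s}B_2^s$. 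Its volume is $(C/s)^{s}$ by Stirling, since $\mathrm{vol}(B_2^s)\approx(2\pi e/s)^{s/2}$. On the other hand $\mathrm{vol}(\frac1{2s}B_\infty^s)=s^{-s}$, so the ratio is $2^{O(s)}$.

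\emph{\textup{(2)}$\Rightarrow$\textup{(3)}, in the parameterized form.} Assume $N(K^\circ,\frac1{\sqrt s}B_2^s)\le 2^{Cr}$ with $r\ge s$. I would not work with $K$ directly, since it may be long and thin, which makes its volume useless for Gaussian measure. Instead set $L=K\cap\sqrt s\,B_2^s$, so that $L^\circ=\mathrm{conv}\bigl(K^\circ\cup\frac1{\sqrt s}B_2^s\bigr)$. This hull is contained in $\bigcup_{t\in[0,1]}\bigl(tK^\circ+(1-t)\frac1{\sqrt s}B_2^s\bigr)$. For a fixed $t$, each set in the union is covered by $2^{Cr}$ balls of radius $\frac2{\sqrt s}$. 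Discretizing $t$ on a grid of mesh $1/\mathrm{poly}(s)$ should then give
\[
N\bigl(L^\circ,\tfrac{3}{\sqrt s}B_2^s\bigr)\le 2^{Cr}\mathrm{poly}(s),
\]
using the boundedness $K^\circ\subseteq \mathbb R^s$ of the polar of a body. If the grid error in the $K^\circ$ term is awkward, I would instead cover $K^\circ\cap R B_2^s$ for a suitable $R$ and absorb the rest.

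From this covering, $\mathrm{vol}(L^\circ)\le 2^{C r+O(\log s)}(C'/s)^{s}$. The Bourgain--Milman inequality $\mathrm{vol}(L)\mathrm{vol}(L^\circ)\ge c^s\mathrm{vol}(B_2^s)^2$ then gives
\[
\mathrm{vol}(L)\ge 2^{-O(r)}\,\mathrm{vol}(\sqrt s B_2^s).
\]
Finally, $L\subseteq\sqrt sB_2^s$, and on that ball the Gaussian density is at least $(2\pi)^{-s/2}e^{-s/2}$. Since $\mathrm{vol}(\sqrt sB_2^s)(2\pi)^{-s/2}e^{-s/2}=2^{-O(s)}$, this yields $\gamma_s(K)\ge\gamma_s(L)\ge 2^{-C''r}$. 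The rescaling constant $c$ in the statement absorbs the constant radii introduced along the way. Taking $r=s$ recovers \textup{(3)}.

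\emph{\textup{(3)}$\Rightarrow$\textup{(2)}.} I would use the dual Sudakov/Gaussian argument. If $\gamma_s(K)\ge 2^{-O(s)}$, then for a maximal $\frac1{\sqrt s}$-separated set $\{y_j\}\subseteq K^\circ$, the slabs $\{x:\langle x,y_j\rangle\le1\}$ all contain $K$. By a Gaussian-shift (Cameron--Martin) argument, the sets $K+ \sqrt s\,y_j\cdot\Theta(\sqrt s)$ are nearly disjoint, each of Gaussian measure at least $2^{-O(s)}e^{-O(s)}$. This bounds the number of points $y_j$ by $2^{O(s)}$.

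\emph{Main obstacle.} I expect the hardest step to be the covering estimate for the convex hull $L^\circ$, namely controlling the discretization in $t$ without losing more than $2^{O(r)}$. The rest consists of standard volumetric facts.
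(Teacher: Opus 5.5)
The paper gives no proof of this statement; it imports it from Dadush--Jiang--Reis. It only ever uses the directions \textup{(1)}$\Rightarrow$\textup{(3)} and \textup{(2)}$\Rightarrow$\textup{(3)}.

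Your \textup{(1)}$\Leftrightarrow$\textup{(2)} is fine. Your \textup{(2)}$\Rightarrow$\textup{(3)} is a correct self-contained route: truncate to $L=K\cap\sqrt{s}\,B_2^s$, apply Bourgain--Milman, then compare Lebesgue and Gaussian measure on $\sqrt{s}\,B_2^s$. It even gives $c=1$.

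The obstacle you single out is not real. For convex $A,B$ containing $0$ one has $tA+(1-t)B\subseteq A+B$. Hence $L^\circ\subseteq K^\circ+\frac{1}{\sqrt{s}}B_2^s$, which is covered by $2^{Cr}$ balls of radius $2/\sqrt{s}$ with no discretization at all. Your proposed mesh $1/\mathrm{poly}(s)$ would in fact not suffice. The hypothesis only confines $K^\circ$ to a ball of radius $2^{O(r)}/\sqrt{s}$, and this is attained, e.g.\ by a thin slab. So the grid error in the $tK^\circ$ term can be exponentially large.

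The genuine gap is \textup{(3)}$\Rightarrow$\textup{(2)}, where your shift argument pairs the wrong objects.
\begin{itemize}
\item For $y_j\in K^\circ$ the length $|y_j|$ is not bounded in terms of $s$. The slab $K=\{x:|x_1|\le 2^{-s}\}$ has $\gamma_s(K)\approx 2^{-s}$, yet $2^s e_1\in K^\circ$. So the Cameron--Martin bound $\gamma_s(K+v)\ge e^{-|v|^2/2}\gamma_s(K)$ gives nothing for $v=\Theta(s)\,y_j$.
\item Disjointness of $K+\lambda y_i$ and $K+\lambda y_j$ is equivalent to $\|y_i-y_j\|_K>2/\lambda$. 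Euclidean $\frac{1}{\sqrt{s}}$-separation of the $y_j$ does not provide this.
\item What the Gaussian-shift packing argument actually proves is $N(\sqrt{s}\,B_2^s,2K)\le e^{s/2}/\gamma_s(K)$. That is a covering of the ball by $K$, the dual statement. Converting it into a covering of $K^\circ$ by balls requires duality of entropy numbers (Artstein--Milman--Szarek).
\item Sudakov minoration does not rescue the argument either. The hypothesis $\gamma_s(K)\ge 2^{-O(s)}$ does not bound $\mathbb E\|g\|_K$; for the slab it is of order $2^s$.
\end{itemize}

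A simple fix mirrors your own \textup{(2)}$\Rightarrow$\textup{(3)}, with Blaschke--Santal\'o in place of Bourgain--Milman.
\begin{enumerate}
\item Choose $C$, depending only on the constant in \textup{(3)}, so that $L:=K\cap C\sqrt{s}\,B_2^s$ satisfies $\gamma_s(L)\ge\frac12\gamma_s(K)$. This uses Gaussian concentration of $|g|$ around $\sqrt{s}$.
\item Then $\mathrm{vol}(L)\ge(2\pi)^{s/2}\gamma_s(L)\ge 2^{-O(s)}\mathrm{vol}(\sqrt{s}\,B_2^s)$.
\item Blaschke--Santal\'o gives $\mathrm{vol}(L^\circ)\le 2^{O(s)}\mathrm{vol}(s^{-1/2}B_2^s)$.
\item Since $L^\circ\supseteq\frac{1}{C\sqrt{s}}B_2^s$, we get $L^\circ+\frac{1}{2\sqrt{s}}B_2^s\subseteq(1+\frac{C}{2})L^\circ$.
\item The volumetric covering bound then yields $N(K^\circ,\frac{1}{\sqrt{s}}B_2^s)\le N(L^\circ,\frac{1}{\sqrt{s}}B_2^s)\le 2^{O(s)}$.
\end{enumerate}
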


Another tool we will need  is the following relative-entropy net mechanism of  ~\cite[Lemma~3.6]{dadush2022matrix}.

\begin{theorem}[Relative-entropy net to covering]
\label{thm:renet-covering}
Let $\Delta$ be a block-diagonal spectraplex, and for $S\subseteq[n]$ define
\[
  \mathcal A_S(X)=\bigl(\Tr(A_iX)\bigr)_{i\in S}\in\R^S.
\]
Suppose $T_0\subset\Delta$ has size $\exp(O(s))$, where $s=|S|$, and every
$X\in\Delta$ has some $Y\in T_0$ with $\KL{X}{Y}\le D$.  Then
\[
  N\!\left(\mathcal A_S(\Delta),
    C\sqrt{D/s}\,B_\infty^S\right)
  \le \exp(O(s)).
\]
\end{theorem}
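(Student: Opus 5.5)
The plan is to build the covering locally around each $Y\in T_0$ and then take a union over $T_0$. Fix $Y\in T_0$ and set
\[
W_Y=\bigl\{\mathcal A_S(X)-\mathcal A_S(Y):\ X\in\Delta,\ \KL{X}{Y}\le D\bigr\}\subseteq\R^S .
\]
By hypothesis $\mathcal A_S(\Delta)\subseteq\bigcup_{Y\in T_0}\bigl(\mathcal A_S(Y)+W_Y\bigr)$, and $|T_0|=\exp(O(s))$. It therefore suffices to show
\[
N\bigl(W_Y,C\sqrt{D/s}\,B_\infty^S\bigr)\le \exp(O(s))
\]
for each fixed $Y$. Joint convexity of relative entropy and linearity of $\mathcal A_S$ make $W_Y$ convex. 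I will pass to the symmetric convex set $W_Y-W_Y$, which only changes constants. Then I rescale by $1/\sqrt{Ds}$, so the target radius $\sqrt{D/s}$ becomes $1/s$. By the equivalence (1)$\Leftrightarrow$(3) of Theorem~\ref{thm:covering-measure}, applied with $K^\circ=(W_Y-W_Y)/\sqrt{Ds}$, it is enough to show
\[
\Pr_{g\sim N(0,I_S)}\Bigl[\sup_{w\in W_Y}\langle g,w\rangle\le C\sqrt{Ds}\Bigr]\ge \tfrac12 .
\]
This follows by Markov's inequality from the mean-width bound $\mathbb E\sup_{w\in W_Y}\langle g,w\rangle\le 2\sqrt{Ds}$, together with the symmetric bound for $-g$ and a union bound.

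To bound the mean width I will use the Gibbs variational principle. For self-adjoint $H$ and density matrices $X,Y$,
\[
\Tr(HX)\le \KL{X}{Y}+\log\Tr\exp(\log Y+H),
\]
and Golden--Thompson gives $\Tr\exp(\log Y+H)\le\Tr(Ye^{H})$. Take $H=\lambda\sum_{i\in S}g_iA_i$ and subtract $\Tr(HY)$. For $X$ with $\KL{X}{Y}\le D$ this gives
\[
\lambda\langle g,\mathcal A_S(X)-\mathcal A_S(Y)\rangle\le D+\log\Tr\bigl(Ye^{\lambda G}\bigr)-\lambda\Tr(YG),
\qquad G=\sum\nolimits_{i\in S}g_iA_i .
\]
The right-hand side does not depend on $X$, so it bounds the supremum over $W_Y$. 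Taking expectations, the term $\mathbb E\,\Tr(YG)$ vanishes, and by Jensen $\mathbb E\log\Tr(Ye^{\lambda G})\le\log\Tr\bigl(Y\,\mathbb E e^{\lambda G}\bigr)$. If the key estimate below holds, then $\mathbb E\sup_{W_Y}\langle g,\cdot\rangle\le (D+\lambda^2 s/2)/\lambda$. Choosing $\lambda=\sqrt{2D/s}$ gives the required $\sqrt{2Ds}$.

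The step I expect to be the main obstacle is this matrix moment-generating-function estimate:
\[
\Tr\bigl(Y\,\mathbb E e^{\lambda G}\bigr)\le\norm{\mathbb E e^{\lambda G}}\le e^{\lambda^2 s/2}.
\]
The first inequality holds because $Y$ is a density matrix. Since the $A_i$ do not commute, one cannot simply factor the exponential. I would instead expand in moments. The odd moments vanish, and by Wick's formula $\mathbb E G^{2k}$ is a sum over the $(2k-1)!!$ pair partitions. For each pair partition, summing over the $s^k$ index assignments, every term is a product of $2k$ contractions and so has norm at most $1$. Hence $\norm{\mathbb E G^{2k}}\le(2k-1)!!\,s^k$. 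Summing the series gives
\[
\norm{\mathbb E e^{\lambda G}}\le\sum_{k\ge0}\frac{\lambda^{2k}(2k-1)!!\,s^k}{(2k)!}=e^{\lambda^2 s/2}.
\]
The crude triangle inequality over pairings suffices here because we only need the dimension-free factor $s$, not the sharper variance $\norm{\sum_i A_i^2}$. The block-diagonal structure of $\Delta$ is used only so that all matrices live in a common space where Golden--Thompson applies. Combining the per-$Y$ covering with the union over $T_0$ gives the claimed bound $\exp(O(s))$.
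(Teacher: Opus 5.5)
Your proof is correct, but it takes a different route from the one behind this statement. The paper gives no proof of its own; it imports the result from Dadush--Jiang--Reis, whose proof builds the cover by mirror descent. Roughly, for a net point $Y_0\in T_0$ and a target $X$ with $\KL{X}{Y_0}\le D$, one runs matrix multiplicative weights: while some $i\in S$ has $\abs{\Tr(A_i(X-Y_t))}>\epsilon$, set $Y_{t+1}\propto\exp(\log Y_t\pm\eta A_i)$. Golden--Thompson shows that each step lowers $\KL{X}{Y_t}$ by $\gtrsim\epsilon^2$ when $\eta\asymp\epsilon$, so at $\epsilon\asymp\sqrt{D/s}$ there are at most $O(D/\epsilon^2)=O(s)$ steps. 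The final iterate depends only on the multiset of signed coordinates used, so there are at most $\binom{O(s)}{O(s)}=e^{O(s)}$ possible endpoints, and these serve as explicit $\ell_\infty$ cover centres.

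You instead prove a Gaussian-width (transport--entropy type) inequality, $\mathbb E\sup_{w\in W_Y}\langle g,w\rangle\le\sqrt{2Ds}$. You get it from the Gibbs variational principle, Golden--Thompson, and the dimension-free Wick bound $\norm{\mathbb E e^{\lambda G}}\le e^{\lambda^2 s/2}$. You then turn width into covering through Theorem~\ref{thm:covering-measure}, which is essentially Sudakov minoration combined with $N(\sqrt D B_2^S,\sqrt{D/s}B_\infty^S)=e^{O(s)}$.

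Your route is shorter and isolates a clean, reusable width bound on the image of a relative-entropy ball. However, it is non-constructive, and it is tied to the critical scale $\sqrt{D/s}$, where $\ell_2$- and $\ell_\infty$-coverings agree up to $e^{O(s)}$. The mirror-descent route avoids Gaussian processes, works natively in $\ell_\infty$ at every scale $\epsilon$ (with $O(D/\epsilon^2)$ steps), and produces the cover explicitly.

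Two small points to tidy up:
\begin{enumerate}
\item Your ``enough to show'' reduction should be stated for the support function of $W_Y-W_Y$, namely $h(g)+h(-g)$ with $h(g):=\sup_{w\in W_Y}\langle g,w\rangle$. Here $h\ge0$ because $0\in W_Y$, which is what Markov needs, and the resulting constant is absorbed by rescaling $K$.
\item If $Y$ is singular, note that $\KL{X}{Y}<\infty$ forces $\operatorname{supp}X\subseteq\operatorname{supp}Y$. You can then work on $\operatorname{supp}Y$ with the compressed contractions $PA_iP$, for which your Wick bound is unchanged.
\end{enumerate}
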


Finally, the  following equal-block relative-entropy net we consider here is the block-diagonal spectraplex net of
~\cite[Theorem~3.9]{dadush2022matrix}, whose construction uses
operator-norm covering estimates for Schatten classes.

\begin{theorem}[Equal-block relative-entropy net]
\label{thm:equal-block-entropy-net}
Let \(\Delta_{h,m}\) be the block-diagonal spectraplex on total dimension \(m\), with all blocks of size \(h\). For every parameter \(q\ge 1\), there exists a relative-entropy net \(T\subseteq \Delta_{h,m}\) such that
\[
|T|\le \exp(Cq)
\]
and for every \(X\in\Delta_{h,m}\) there is \(Y\in T\) satisfying
\[
S(X||Y)
\le
C\max\left\{1,\log\left(\frac{2hm}{q}\right)\right\}.
\]
\end{theorem}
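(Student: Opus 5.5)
The proof will construct the net $T$ explicitly, in three stages: quantize, cover in operator norm, then smooth by mixing with the maximally mixed state. Write $\Delta_{h,m}$ for the set of block-diagonal density matrices $X=\bigoplus_{j=1}^{m/h}X_j$ with $X_j\in M_h(\C)$, $X_j\succeq0$ and $\sum_j\Tr X_j=1$.

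The entropy bound will come from a domination argument. Suppose $Y\in\Delta_{h,m}$ satisfies $Y\succeq aX$ for some $a\in(0,1]$. Operator monotonicity of $\log$, applied on the support of $X$ (or to $X+\epsilon I$ with $\epsilon\to0$), gives $\log Y\succeq \log a+\log X$ there. Hence
\[
\KL{X}{Y}\le \log(1/a).
\]
So it suffices to find, for every $X$, a net point $Y$ with $Y\succeq aX$ and $\log(1/a)\le C\max\{1,\log(2hm/q)\}$.

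The domination will come from an operator-norm cover. Suppose $Z$ is block-diagonal and self-adjoint with $\norm{X-Z}\le\varepsilon$. Set
\[
Y:=\frac{Z+\varepsilon I_m}{\Tr(Z)+\varepsilon m}.
\]
Then $Z+\varepsilon I_m\succeq X\succeq0$. Also $\Tr(Z)+\varepsilon m\le 1+2\varepsilon m$, because $\abs{\Tr(Z-X)}\le\varepsilon m$. (Before normalizing, one can project $Z$ onto the positive cone without increasing the operator-norm error.) Therefore $Y\succeq X/(1+2\varepsilon m)$, and the divergence is at most $\log(1+2\varepsilon m)$. It remains to cover $\Delta_{h,m}$, which lies inside the unit trace-norm ball of block-diagonal matrices, in operator norm at scale $\varepsilon$ with $\exp(Cq)$ points.

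I expect this covering estimate to be the main obstacle. I would first try dual Sudakov minoration, which is the Schatten-class covering the statement alludes to. Since the trace-norm ball is the polar of the operator-norm ball, it gives
\[
\log N\bigl(\Delta_{h,m},\varepsilon B_{\op}\bigr)\lesssim \frac{\bigl(\mathbb E\norm{G}\bigr)^2}{\varepsilon^2},
\]
where $G$ is a block-diagonal GUE with blocks of size $h$. Taking $\mathbb E\norm{G}\lesssim\sqrt h+\sqrt{\log(m/h)}$, the choice $\varepsilon^2\approx h\log(m)/q$ fits the budget $\exp(Cq)$.

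However, $\log(1+2\varepsilon m)$ is then only of order $\log\bigl(m\sqrt{h/q}\bigr)$, not $\log(hm/q)$. I would close this gap by refining the cover multiplicatively rather than additively. First quantize the block weights $w_j=\Tr X_j$ geometrically; this is a dyadic net on the simplex of cost $\exp(O(m/h))$, folded into $q$ when $q\ge m/h$. Then cover each normalized block $X_j/w_j$ at its own scale. Alternatively, cover $X^{1/2}$ in Frobenius norm, so that the additive error enters relative to $X$ itself. With either refinement the smoothing term can be taken of size about $(q/hm)\cdot I_m/m$ while $Y\succeq aX$ is kept with $a\gtrsim q/(hm)$. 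This yields $S(X\|Y)\le C\max\{1,\log(2hm/q)\}$, and the $\max\{1,\cdot\}$ absorbs the regime $q\ge hm$, where a constant-accuracy net suffices.
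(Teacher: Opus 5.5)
Your reduction is sound: $Y\succeq aX$ gives $\KL{X}{Y}\le\log(1/a)$, and smoothing an operator-norm $\varepsilon$-net point as $(Z+\varepsilon I)/\Tr(Z+\varepsilon I)$ gives error $\log(1+2\varepsilon m)$. The real work is the covering estimate, and that is where the proposal has a genuine gap.

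Dual Sudakov through the Frobenius ball only gives $\log N(\Delta_{h,m},\varepsilon B_{\op})\lesssim(h+\log(em/h))/\varepsilon^{2}$. As you note, the error then stays above $\tfrac12\log m$ even at $q\asymp hm$, where the statement requires $O(1)$. This is exactly the regime the paper uses: Lemma~\ref{lem:one-stratum-net} with $\tau\gtrsim hm$ is what removes the logarithm from the final $O(\sqrt n)$.

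Your last paragraph then asserts $a\gtrsim q/(hm)$ without an argument, and none of the three mechanisms delivers it.
\begin{itemize}
\item Geometric quantization of the block masses costs $e^{O(m/h)}$, which is over budget whenever $q<m/h$. That regime is not trivial: for $h=1$ and $q=m/\log m$ the target is $O(\log\log m)$, while a one-point net gives $\log m$ and your Sudakov net gives about $\tfrac12\log m$.
\item Covering each normalized block ``at its own scale'' just reduces to the one-block spectraplex with budget up to $h^{2}$. There dual Sudakov again loses $\tfrac12\log h$.
\item A Frobenius cover of $X^{1/2}$ controls $E=X^{1/2}-R$ only through $\norm{E}\le\norm{E}_{\F}\le\delta$. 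So $X\preceq 2R^{2}+2\delta^{2}I$ reproduces the same additive loss $\delta^{2}m$. Getting $O(1)$ at $q\asymp hm$ would need $\delta\lesssim m^{-1/2}$ on an $hm$-dimensional set, which by a volume count costs $e^{\Omega(hm\log m)}$ points.
\end{itemize}

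The paper does not reprove this statement. It imports it from Dadush--Jiang--Reis, whose construction, as the paper notes, uses operator-norm covering estimates for Schatten classes. That is precisely the missing ingredient.

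What you need is $\log N\bigl(B_{S_1^h},\varepsilon B_{S_\infty^h}\bigr)\lesssim h/\varepsilon$ for $0<\varepsilon\le1$. In entropy-number terms this is $h/k$, compared with the $\sqrt{h/k}$ that Sudakov through $S_2^h$ yields. It follows by dropping eigenvalues below $\varepsilon/2$ and covering the dyadic eigenvalue bands as low-rank matrices.

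Combine this with the count $\log\binom{m/h}{\le 1/\varepsilon}\lesssim\varepsilon^{-1}\log(e+\varepsilon m/h)$ for the at most $1/\varepsilon$ blocks of mass above $\varepsilon$; every other block is within $\varepsilon$ of $0$. This gives $\log N(\Delta_{h,m},\varepsilon B_{\op})\lesssim\varepsilon^{-1}\bigl(h+\log(e+\varepsilon m/h)\bigr)$. Choosing $\varepsilon m\asymp(hm/q)\log(e+m/q)$ in your own smoothing step then yields $\log(1+2\varepsilon m)=O\bigl(\max\{1,\log(2hm/q)\}\bigr)$, with no multiplicative refinement needed.

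Alternatively, your $X^{1/2}$ idea can be rescued by covering $X^{1/2}$ in operator norm instead of Frobenius norm, where dual Sudakov is essentially sharp. With $\norm{X^{1/2}-R}\le\delta$ and $\norm{R}_{\F}\le1$, you get $X\preceq 2R^{2}+2\delta^{2}I$ with trace at most $2+2\delta^{2}m$. The error is then $\log(2+2\delta^{2}m)$ with $\delta^{2}\asymp(h+\log(em/h))/q$. This proves the statement when $h\gtrsim\log(em/h)$, but small blocks (in particular $h=1$) still need the refined counting above.
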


\section{Proof of the Main Theorem}
\subsection{Block-Diagonal case via a Multiscale Block-Partial Coloring Theorem}
In this section we will study the matrix discrepancy of  block diagonal matrices. This setting has been treated previously by  \cite{dadush2022matrix}, who proved a Matrix Spencer bound for block diagonal matrices with a fixed block-size parameter $h$, obtaining a discrepancy bound of order $\sqrt{n\max{(1,\log(hm/n))}}$. Our argument builds on the relative-entropy net and partial-coloring framework of Dadush, Jiang, and Reis. While their argument treats the block structure at a single scale, through a relative-entropy net for  the block-diagonal spectraplex, our
contribution is a multiscale refinement of this argument for heterogeneous block systems:
We decompose the common block structure into groups of comparable block sizes and assign separate entropy budgets to the different scales. This replaces the homogeneous complexity parameter $hm$ by the summable quantity
\[
Q=\sum_j h_jm_j,
\]
where $h_j$ is the largest block size in the $j$-th group and $m_j$ is its total dimension.
More precisely, we consider the following multiscale decomposition for block diagonal matrices.

\begin{definition}[Block stratification] \label{def:stratification}
Let
\[
H=\bigoplus_{\alpha\in\mathcal B}H_\alpha
\]
be a fixed common block decomposition, and write $b_\alpha:=\dim H_\alpha .$ A \emph{block stratification} is a partition
\[
\mathcal B=\mathcal P_1\sqcup\cdots\sqcup \mathcal P_J
\]
of the block index set into nonempty groups, called strata. For a stratum $\mathcal P_j$, define
\[
\cH_{\mathcal P_j}:=\bigoplus_{\alpha\in\mathcal P_j}\cH_\alpha,
\qquad
h_j:=\max_{\alpha\in\mathcal P_j} b_\alpha,
\qquad
m_j:=\sum_{\alpha\in\mathcal P_j} b_\alpha,
\]
and $q_j:=h_jm_j$. The total stratified block complexity is $Q(\mathcal P):=\sum_{j=1}^J q_j= \sum_{j=1}^J h_jm_j$.

We also write
\[
\cH_{\max}:=\max_{\alpha\in\mathcal B}b_\alpha,
\qquad
M:=\sum_{\alpha\in\mathcal B}b_\alpha,
\]
for the largest block size and total ambient dimension of the compressed block system.
\end{definition}

\begin{theorem}[Block-Diagonal Matrix Spencer]\label{thm:block-diag-spencer}
    Let $A_1,\dots,A_n$ be self-adjoint satisfying $\|A_i\|\le1$ and which are block diagonal with respect to a fixed common block decomposition $H=\bigoplus_{\alpha\in\mathcal B}H_\alpha$. Consider  block stratification as in Definition \ref{def:stratification} and let $(h_j,m_j,q_j,Q)$ be the associated parameters.
    Assume that these satisfy $Q = \sum_{j=1}^J h_j m_j \le O(n)$ and $J \le O(\log(e n ))$. Then, there exists a coloring $x\in\{\pm 1\}^n$ such that
    \[
    \Bigl\| \nlsum_{j=1}^{n} x_i A_i\Bigr\| \le O(\sqrt{n}).
    \]
\end{theorem}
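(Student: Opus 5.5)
We follow the Spencer/Rothvoss iterated partial-coloring scheme. We maintain a fractional coloring $y\in(-1,1)^n$ and, at each round, the set $S\subseteq[n]$ of still-alive coordinates, with $s=|S|$. Applying Theorem~\ref{thm:partial-coloring} to the discrepancy body
\[
K_S=\Bigl\{x\in\R^S:\bigl\|\nlsum_{i\in S}x_iA_i\bigr\|\le\Delta_s\Bigr\}
\]
freezes a constant fraction of $S$ at cost $C\Delta_s$. After $O(\log n)$ rounds all coordinates are integral, and the total discrepancy is $\sum_{\text{rounds}}C\Delta_{s}$. We will aim for $\Delta_s=O\bigl(\sqrt{s\,\max\{1,\log(en/s)\}}\bigr)$. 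Since $s$ decays geometrically, $s_t\le(1-c)^t n$, this sum is $O(\sqrt n)$, because $\sum_t\sqrt{(1-c)^t n\cdot t}$ converges. The main task is therefore the per-round Gaussian measure bound $\gamma_s(K_S)\ge 2^{-as}$.

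For the measure bound we use Theorem~\ref{thm:covering-measure}, which reduces it to covering the polar body. The polar $K_S^\circ$ is, up to the factor $1/\Delta_s$, the set $\mathcal A_S(\text{conv}\{\pm\Delta\})$, where $\Delta$ is the spectraplex. Since the $A_i$ are block diagonal, the relevant $X$ may be taken block diagonal with respect to $\bigoplus_\alpha H_\alpha$: compressing $X$ onto the blocks does not change $\Tr(A_iX)$. Hence $\Delta$ is the block-diagonal spectraplex, and it suffices to verify the hypothesis of Theorem~\ref{thm:renet-covering}. That is, we need a relative-entropy net $T_0$ of size $\exp(O(s))$ with $D=O(\max\{1,\log(en/s)\})$. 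Theorem~\ref{thm:renet-covering} then gives an $\ell_\infty$ covering at scale $C\sqrt{D/s}$. Rescaling by $\Delta_s=C'\sqrt{sD}$ turns this into scale $\frac1s$ in the sense of item (1) of Theorem~\ref{thm:covering-measure}, which yields $\gamma_s(K_S)\ge 2^{-O(s)}$. Adjusting constants then gives the threshold $2^{-as}$.

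The key new step, and the expected obstacle, is building this multiscale net from the stratification. A density $X$ in the block-diagonal spectraplex decomposes as $X=\bigoplus_j p_jX_j$, where $p=(p_j)$ is a probability vector on the $J$ strata and each $X_j\in\Delta_{h_j,m_j}$. Blocks of size below $h_j$ are padded to size $h_j$, which only helps.

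\begin{itemize}
\item For each stratum we apply Theorem~\ref{thm:equal-block-entropy-net} with budget $q=s_j:=\lceil s\,q_j/Q\rceil+\lceil s/J\rceil$, so that $\sum_js_j=O(s)$. The net size is $\prod_j\exp(Cs_j)=\exp(O(s))$, and stratum $j$ incurs error $D_j\le C\log(2q_jQ/(s q_j))=C\log(2Q/s)=O(\log(en/s))$, using $Q=O(n)$.
\item For the weights $p$ we take a net on the $J$-simplex, $J=O(\log en)$, mixed with the uniform distribution. This gives $S(p\|\hat p)=O(\log J)$ with net size $\exp(O(J\log J))$. We must have $J\log J\lesssim s$; in late rounds where $s$ is below this, we can simply stop and color the remaining $O(\log n\log\log n)\ll\sqrt n$ elements arbitrarily.
\item The chain rule then gives $S(X\|Y)=S(p\|\hat p)+\sum_jp_jS(X_j\|Y_j)\le O(\log(en/s))+O(\log\log n)$.
\end{itemize}

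The stray $\log J$ term in the last bullet is the delicate point. To absorb it, we will instead mix $\hat p$ with weights proportional to $s_j/s$. Alternatively, we can note that $\sqrt{s\log\log n}$ summed over rounds is still $O(\sqrt n)$, because of the geometric decay of $s$. Either route closes the argument.
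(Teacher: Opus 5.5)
Your architecture is the paper's: iterated partial coloring, polar covering through Theorem~\ref{thm:renet-covering}, and the chain rule across strata. Your fixed budgets $s_j=\lceil sq_j/Q\rceil+\lceil s/J\rceil$ are a legitimate and somewhat simpler alternative to the paper's mass-dependent budgets $\tau_j=s\beta_j$ followed by Jensen. Since $s_j\ge sq_j/Q$, Theorem~\ref{thm:equal-block-entropy-net} already gives every stratum error at most $C\max\{1,\log(2Q/s)\}$ uniformly, so no averaging over the masses is needed.

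The argument does not close as written, however, because of the mass term. If $S(p\|\hat p)$ is only bounded by $O(\log J)=O(\log\log n)$, then the entropy budget you feed into Theorem~\ref{thm:renet-covering} is $\asymp\log\log n$ already at $s=n$. The first round alone is then charged $\sqrt{n\log\log n}$, and the rounds sum to $\sqrt{\log\log n}\,\sum_t\sqrt{s_t}$, i.e.\ $O(\sqrt{n\log\log n})$, not $O(\sqrt n)$. So your second rescue is false. Geometric decay of $s_t$ is what makes $\sum_t\sqrt{s_t\log(en/s_t)}=O(\sqrt n)$, because that logarithm is $O(1)$ when $s_t\approx n$. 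It cannot absorb a factor that is already large in the first round. Your first rescue is not specified. Mixing with $w_j=s_j/\sum_k s_k\gtrsim 1/J$ by itself still only gives $\log(p_j/\hat p_j)\lesssim\log J$; what matters is the accuracy of the net point, not the reference vector.

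What is needed is a net on the mass simplex with \emph{multiplicative} accuracy, $\hat p_j\ge p_j/2$ for all $j$. This gives $S(p\|\hat p)=\sum_j p_j\log(p_j/\hat p_j)\le\log 2$. It is exactly the paper's $\Gamma_s$: take $\hat p_j=(k_j+1)/(s+J)$ with $k_j\ge\lfloor sp_j\rfloor$ and $\sum_j k_j=s$. For $s\ge J$ this gives $\hat p_j\ge p_j/2$ with $|\Gamma_s|\le\binom{2s}{s}\le 4^s$. Your own construction also achieves this once evaluated correctly. An $\ell_\infty$-net of the simplex at precision $1/(2J)$, mixed half-and-half with the uniform vector, satisfies $\hat p_j\ge p_j/2+1/(4J)$, so your $O(\log J)$ was an overestimate.

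With this fix, $S(X\|Y)\le\log 2+C\max\{1,\log(2Q/s)\}$ and your scheme goes through. Stopping at $s\lesssim J\log J$ and rounding the rest arbitrarily costs $O(\log n\log\log n)=o(\sqrt n)$. That is a fine substitute for the paper's single-stratum net in the $s<J$ regime.

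One further slip: padding each block of stratum $j$ up to size $h_j$ does not ``only help''. It can inflate the stratum dimension from $m_j$ to nearly $h_jm_j$. For example, a single stratum with one block of size $n^{1/4}$ and $n^{3/4}$ blocks of size $1$ has $h_jm_j\approx n$. After padding its complexity is $\approx n^{5/4}$, which makes the entropy error $\asymp\log n$ at $s\approx n$. Instead, group small blocks into super-blocks of size at most $2h_j$ before padding, as in Lemma~\ref{lem:one-stratum-net}; this keeps the total dimension at most $4m_j$. For the dyadic stratification used later, the issue does not arise.
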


\subsection{Partial Coloring from Entropy Nets}
In this section we prove the following  partial coloring lemma and then we use it to deduce Theorem \ref{thm:block-diag-spencer}. 

\begin{lemma}[Multiscale block partial coloring]
\label{lem:stagewise-multiscale-block-pc}
 Let $A_1,\dots,A_n$ be self-adjoint satisfying $\|A_i\|\le1$ and which are block diagonal with respect to a fixed common block decomposition $H=\bigoplus_{\alpha\in\mathcal B}H_\alpha$. Consider  block stratification as in Definition \ref{def:stratification} and let $(h_j,m_j,q_j,Q)$ be the associated parameters.
Then, for every active set \(S\subseteq[n]\), with \(s:=|S|\), and every shift $y\in(-1,1)^S$,
there exists an increment $x\in[-1,1]^S$
such that \(x+y\in[-1,1]^S\) and at least \(\Omega(s)\) coordinates of \(x+y\) lie in \(\{\pm1\}\), and
\[
\left\|
\sum_{i\in S}x_iA_i
\right\|
\le \Delta(s),
\]
where
\[
\Delta(s)
\le
\begin{cases}
O(\sqrt{s\log\!\left(e+\dfrac{Q}{s}\right)}), & s\ge J,\\[1.2em]
O(\sqrt{s\log\!\left(e+\dfrac{b_{\max}M}{s}\right)}), & s<J.
\end{cases}
\]
Moreover, if \(Q\le O(n)\), then
\[
b_{\max}\le O(\sqrt{n}),
\qquad
M\le O(n),
\qquad
b_{\max}M\le O(n^{3/2}).
\]
\end{lemma}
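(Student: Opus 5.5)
The plan is to follow the Dadush--Jiang--Reis pipeline. I will build a relative-entropy net for the block-diagonal spectraplex $\Delta$ (on $H=\bigoplus_\alpha H_\alpha$), turn it into a covering bound via Theorem~\ref{thm:renet-covering}, turn that into Gaussian measure via Theorem~\ref{thm:covering-measure}, and finally invoke Theorem~\ref{thm:partial-coloring} for the body $K=\{x\in\R^S:\|\sum_{i\in S}x_iA_i\|\le\Delta(s)\}$.

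First, by duality, $\|\sum_{i\in S} x_iA_i\|=\sup_{X\in\Delta}|\langle x,\mathcal A_S(X)\rangle|$. Here $\Delta$ is the set of block-diagonal density matrices; we may restrict to these because each $A_i$ is block diagonal, so pinching $X$ to the blocks does not change $\Tr(A_iX)$. Hence $K^\circ$ is (a rescaling of) $\mathrm{conv}(\pm\mathcal A_S(\Delta))/\Delta(s)$. Once I have $N(\mathcal A_S(\Delta),C\sqrt{D/s}\,B_\infty^S)\le e^{O(s)}$, taking $\Delta(s)\asymp\sqrt{sD}$ gives $N(K^\circ,\tfrac1s B^S_\infty)\le e^{O(s)}$. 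Convex hulls and symmetrization cost at most $e^{O(s)}$ here, by the standard argument in \cite{dadush2022matrix}. Then Theorem~\ref{thm:covering-measure} (1)$\Rightarrow$(3) gives $\gamma_s(K)\ge 2^{-O(s)}$, and after absorbing constants into $\Delta(s)$, Theorem~\ref{thm:partial-coloring} yields the increment. It will satisfy $x\in CK\cap[-1,1]^S$, $x+y\in[-1,1]^S$, and have $\Omega(s)$ tight coordinates. So the whole task reduces to producing a net $T_0\subset\Delta$ with $|T_0|\le e^{O(s)}$ and relative-entropy radius $D=O(\log(e+Q/s))$ when $s\ge J$, respectively $D=O(\log(e+b_{\max}M/s))$ when $s<J$.

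The case $s<J$ is a single-scale argument. I pad every block to size $b_{\max}$, embedding $\Delta$ into $\Delta_{b_{\max},m}$ with $m\le b_{\max}|\mathcal B|\le b_{\max}M$. Alternatively, and more directly, I view everything inside one equal-block spectraplex of total dimension $O(M)$ with blocks of size $\le b_{\max}$. Applying Theorem~\ref{thm:equal-block-entropy-net} with $q=s$ gives $D=O(\log(e+b_{\max}M/s))$. A little care is needed because padded blocks change the ambient space. The net points are full-rank densities on the padded space, so I compress them back by pinching to the true subspaces and renormalizing. By monotonicity of relative entropy under the pinching/compression channel, this only costs an additive $O(1)$ after normalization; I will check this step explicitly.

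The case $s\ge J$ is the multiscale step and the main obstacle. Write $X\in\Delta$ as $X=\bigoplus_j t_jX_j$, where $t\in$ the simplex on $[J]$ and $X_j\in\Delta_{\mathcal P_j}$. I net each factor separately. For the weights $t$, I use a net of the $J$-simplex whose points are mixed with uniform mass. With $J\le s$ this gives entropy error $O(1)$ (or $O(\log(e+J/s))$) at cardinality $e^{O(s)}$. For stratum $j$, I apply Theorem~\ref{thm:equal-block-entropy-net} (padding blocks to $h_j$) with budget $q_j'=s\,q_j/Q+1$, obtaining error $O(\log(e+Q/s))$ uniformly in $j$. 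The total size is $\exp(C\sum_j q_j')=\exp(C(s+J))=e^{O(s)}$, using $s\ge J$; this is exactly where the hypothesis enters. Combining uses the chain rule $S(\bigoplus t_jX_j\|\bigoplus u_jY_j)=\mathrm{KL}(t\|u)+\sum_j t_jS(X_j\|Y_j)\le O(1)+\max_j S(X_j\|Y_j)$. The delicate points are the padding compression inside each stratum and making the weight net robust, i.e.\ ensuring $u_j\gtrsim 1/J$-type lower bounds. I expect to handle the latter by mixing with uniform weights.

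Finally, the ``moreover'' bounds are elementary. Each $b_\alpha^2\le h_jm_j\le Q$ gives $b_{\max}\le\sqrt Q$, and $M=\sum_j m_j\le\sum_j h_jm_j=Q$. Multiplying these gives $b_{\max}M\le Q^{3/2}=O(n^{3/2})$.
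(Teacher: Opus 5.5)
Your proposal is correct. It uses the same pipeline as the paper: a relative-entropy net, then Theorem~\ref{thm:renet-covering}, a polar covering bound, Theorem~\ref{thm:covering-measure}, and Theorem~\ref{thm:partial-coloring}. Your single-scale argument for $s<J$ and your proof of the ``moreover'' bounds are essentially the paper's.

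The real difference is how you combine the strata when $s\ge J$. The paper discretizes the stratum masses to $\beta\in\Gamma_s$ with $\beta_j\ge\alpha_j/2$. For each such $\beta$ it gives stratum $j$ a \emph{mass-dependent} budget $\tau_j=s\beta_j$ via Lemma~\ref{lem:one-stratum-net}. The per-stratum errors $\log(e+h_jm_j/(s\beta_j))$ are then non-uniform, and Jensen's inequality averages them into $\log(e+2Q/s)$.

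You instead give stratum $j$ a mass-independent budget $q_j'\approx sq_j/Q+1$, so every stratum error is already $O(\log(e+Q/s))$. The weight net and the stratum nets then decouple into a single product, and the chain rule closes with a maximum rather than Jensen. The hypothesis $s\ge J$ enters your argument through $\sum_j q_j'\le s+J$ and the $O(1)$-error weight net. It enters the paper's argument through $|\Gamma_s|\le 4^s$ and $\beta_j\ge\alpha_j/2$. Your route is slightly more direct; the paper's reuses one single-stratum lemma verbatim in both regimes.

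You are also more careful than the paper about pulling net points back from the padded space. Pinching to the original blocks and renormalizing on the true subspace gives $S(X\|Y')=S(X\|\Phi(Y))+\log\operatorname{tr}(P\Phi(Y))\le S(X\|Y)$, so this step costs nothing, not even an additive $O(1)$.

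One step needs the fix you already sketched for $s<J$. Inside a stratum you must first group blocks into super-blocks of size between $h_j$ and $2h_j$ and only then pad; you cannot simply pad every block to $h_j$. For a general stratification, naive padding can inflate the stratum dimension from $m_j$ to $h_j|\mathcal P_j|\approx h_jm_j$ (e.g.\ one block of size $h$ plus $h^2$ singletons). That adds a $\log h_j$ term to the error, which is not $O(\log(e+Q/s))$ when $s\approx Q$. With grouping, the padded parameters satisfy $H_jM_j'\le 8h_jm_j$ and your uniform bound holds.

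For $s<J$, naive padding is in fact harmless, because $s<J\le|\mathcal B|\le M$ gives $b_{\max}^2M/s\le(b_{\max}M/s)^2$. Your chain from $m\le b_{\max}M$ to $D=O(\log(e+b_{\max}M/s))$ leaves this justification out, though.
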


The proof of Lemma~\ref{lem:stagewise-multiscale-block-pc} follows the standard convex-geometric partial-coloring strategy. Fix an active set \(S\subseteq[n]\), write \(s=|S|\), and consider the discrepancy body
\[
K_S(t):=
\left\{
z\in\mathbb R^S:
\left \|
\sum_{i\in S}z_iA_i
\right\|
\le t
\right\}.
\]
By Rothvoss's Gaussian partial-coloring theorem \ref{thm:partial-coloring} in the form used by \cite[Theorem~3.1]{dadush2022matrix} shows that it is enough to prove that \(K_S(t)\) has Gaussian measure at least \(\exp(-O(s))\). We obtain this measure lower bound as in \cite{dadush2022matrix} through polar covering estimates. By trace duality, the polar of the operator-norm discrepancy body is controlled by the image of the block-diagonal spectraplex under the map
\[
X\mapsto \bigl(\operatorname{tr}(A_iX)\bigr)_{i\in S}.
\]
Thus the task reduces to constructing small relative-entropy nets for the relevant block-diagonal spectraplex.

We use a multiscale decomposition to obtain much finer construction of relative-entropy nets. If one treats all blocks at the largest scale, the entropy loss is governed by the crude parameter \(b_{\max}M\). Instead, we first build a variable-budget net for a single stratum of blocks. We then combine these one-stratum nets using a product construction over the block stratification. A discretization of the stratum masses, together with Jensen's inequality, converts the individual costs into the sharper global parameter $Q=\sum_j h_jm_j.$ This yields an entropy error of order $\log\left(e+\frac{Q}{s}\right)$ for \(s\ge J\). Finally, the relative-entropy net the yields  a Gaussian-measure lower bound and hence the existence of a partial-coloring.

We now implement this strategy in three steps. First, we construct relative-entropy nets for the spectraplex associated with a single stratum. Second, we combine these nets across strata by a product construction, choosing the entropy budget of each stratum according to its mass. Third, we use the resulting relative-entropy net to obtain a Gaussian-measure lower bound for the discrepancy body and hence a partial coloring.

\paragraph{Step 1: One-stratum relative-entropy net.}
We begin with the local net construction for a single stratum of the block decomposition. The point of this lemma is to isolate the one-scale estimate: if a collection of blocks has total dimension \(m\) and largest block size \(h\), then its block-diagonal spectraplex admits relative-entropy nets with a tunable size-error tradeoff. Namely, for every parameter \(\tau>0\), one obtains a net of size at most \(\exp(C\tau)\), while the relative-entropy error is logarithmic in \(hm/\tau\). This one-stratum estimate will later be combined across the different strata by assigning a separate value of \(\tau\) to each stratum.

\begin{lemma}[One-stratum relative-entropy net]
\label{lem:one-stratum-net}
Let $H_P=\bigoplus_{\alpha\in P}H_\alpha$
be a finite-dimensional Hilbert space equipped with a fixed block decomposition. Write
\[
b_\alpha:=\dim H_\alpha,
\qquad
m:=\sum_{\alpha\in P}b_\alpha,
\qquad
h:=\max_{\alpha\in P}b_\alpha.
\]
Let
\[
\Delta_P
:=
\left\{
X\succeq 0:
\operatorname{tr}X=1,\
X=\bigoplus_{\alpha\in P}X_\alpha
\text{ with }X_\alpha\in B(H_\alpha)
\right\}
\]
be the block-diagonal spectraplex associated with this decomposition. Then for every \(\tau>0\), there exists a finite relative-entropy net $T_P(\tau)\subseteq \Delta_P$ of the spectraplex $\Delta_P$ with $|T_P(\tau)|\le \exp(C\tau)$ such that  for every \(X\in\Delta_P\) there is \(Y\in T_P(\tau)\) satisfying
\[
S(X||Y)
\le
C\log\left(e+\frac{hm}{\tau}\right).
\]
Here \(C>0\) is a universal constant.
\end{lemma}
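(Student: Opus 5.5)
The plan is to reduce to the equal-block net of Theorem~\ref{thm:equal-block-entropy-net} by padding every block up to size \(h\). The blocks \(H_\alpha\) have sizes \(b_\alpha\le h\). Embed each \(H_\alpha\) isometrically into a space \(\tilde H_\alpha\) of dimension exactly \(h\), say \(\tilde H_\alpha=H_\alpha\oplus \C^{h-b_\alpha}\).

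This would give total dimension \(\tilde m=h|P|\), which can be as large as \(hm\) when there are many tiny blocks. That is too lossy, so I would not pad block by block. Instead, I would first group the blocks of \(P\) greedily into super-blocks \(G_1,\dots,G_k\), each of total dimension between \(h\) and \(2h\), with possibly one smaller final group. Each super-block is padded to dimension exactly \(2h\). A block-diagonal \(X\in\Delta_P\) is then in particular block diagonal with respect to the coarser decomposition \(\bigoplus_l H_{G_l}\). The number of super-blocks is \(k\le m/h+1\le 2m/h\), so the padded total dimension is \(\tilde m=2hk\le 4m\).

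Let \(\iota:\Delta_P\to \Delta_{2h,\tilde m}\) be the embedding of the spectraplex that pads with zeros. I would apply Theorem~\ref{thm:equal-block-entropy-net} with block size \(2h\), total dimension \(\tilde m\), and parameter \(q=\max(\tau,1)\). This yields a net \(\tilde T\) with \(|\tilde T|\le \exp(Cq)\) and relative-entropy error \(C\max\{1,\log(4h\tilde m/q)\}\le C'\log(e+hm/\tau)\). The case \(\tau<1\) is trivial after adjusting constants: take a single point, the maximally mixed state on \(H_P\), whose error is \(\log m\le \log(e+hm/\tau)\). For \(\tau\ge1\) the size bound \(\exp(C\max(\tau,1))\le \exp(C\tau)\) is fine.

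The main obstacle is that the net points \(Y\in\tilde T\) live in the padded, coarser-block spectraplex, not in \(\Delta_P\). For \(S(\iota X\|Y)\) to be finite, \(Y\) must be positive on the support of \(X\). To push \(Y\) back into \(\Delta_P\), I would define \(\Phi(Y)\) by three operations:
\begin{enumerate}
\item compress \(Y\) to \(H_P\);
\item apply the pinching \(\mathcal E_P(Z)=\bigoplus_\alpha \Pi_\alpha Z\Pi_\alpha\) onto the original finer blocks;
\item renormalize, and mix with the maximally mixed state: \(Y'=\tfrac12\,\Phi(Y)+\tfrac12\, I_{H_P}/m\).
\end{enumerate}

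I then need \(S(X\|Y')\le S(\iota X\|Y)+O(1)\). The steps are:
\begin{enumerate}
\item Compression to \(H_P\) followed by pinching is a trace-nonincreasing completely positive map \(\Psi\) that fixes \(\iota X\).
\item Data processing for the relative entropy, in the generalized form for subnormalized second arguments, gives \(S(X\|\Psi(Y))\le S(\iota X\|Y)\). Here \(\operatorname{tr}\Psi(Y)\le 1\).
\item Operator monotonicity of \(\log\), applied to \(Y'\succeq \tfrac12\Psi(Y)\), costs at most an additive \(\log 2\).
\end{enumerate}
This step is the delicate one, since renormalizing \(\Psi(Y)\) could in principle blow up. Using \(\Psi(Y)\) unnormalized, dominated by \(2Y'\), avoids the issue, because \(\log\) is operator monotone.

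Setting \(T_P(\tau)=\{Y'(Y):Y\in\tilde T\}\) then gives \(|T_P(\tau)|\le \exp(C\tau)\) and \(S(X\|Y')\le C\log(e+hm/\tau)+\log 2\). Absorbing constants gives the lemma.
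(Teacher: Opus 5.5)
Your proposal is correct and follows the paper's proof: a single maximally mixed point for $\tau<1$, and for $\tau\ge1$ grouping and padding the blocks into equal blocks of size at most $2h$ with total dimension at most $4m$, then invoking Theorem~\ref{thm:equal-block-entropy-net}. Your pullback by compression, pinching and mixing uses data processing and antimonotonicity of $S(X\|\cdot)$ in the second argument, at a cost of $\log 2$, and it makes rigorous the step the paper only asserts as ``restricting the resulting net back to $\Delta_P$''. One small remark: renormalizing $\Psi(Y)$ can only lower $S(X\|\cdot)$ since $\operatorname{tr}\Psi(Y)\le 1$, so there is no blow-up risk. The only real degeneracy is $\Psi(Y)=0$, and such $Y$ give infinite relative entropy and can simply be discarded.
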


\begin{proof}
For \(0<\tau<1\), we let $T_P(\tau):=\left\{\frac{I_P}{m}\right\}$, where \(I_P\) denotes the identity on \(H_P\), so that \(|T_P(\tau)|=1\le \exp(C\tau)\). Moreover, for every \(X\in\Delta_P\), using that \(h\ge1\) and \(\tau<1\), we have
\[
S\left(X\middle\|\frac{I_P}{m}\right)
=
\operatorname{tr}(X\log X)+\log m
\le
\log m \le 
C\log\left(e+\frac{hm}{\tau}\right),
\]
which proves the claim in the case \(0<\tau<1\).

Now, for the following assume \(\tau\ge1\). Since every block in \(P\) has dimension at most \(h\), we may pad and group the blocks, if necessary, into equal blocks of size \(H\le 2h\) and denote with \(M\) denote the resulting total dimension, satisfying $M\le 4m.$
Moreover, the original block-diagonal spectraplex \(\Delta_P\) embeds into the corresponding equal-block spectraplex with block size \(H\) and total dimension \(M\).
Now, by Theorem \ref{thm:equal-block-entropy-net}, the equal-block relative-entropy net theorem, applied with block size \(H\), total dimension \(M\), and parameter \(q :=\left\lceil \min\{\tau,hm\}\right\rceil\), there exists a relative-entropy net of size $\exp(O(q))\le \exp(O(\tau)).$
Since $HM\le 8hm$, 
the relative-entropy error is at most $O\left(\max\left\{1,\log\left(\frac{HM}{q}\right)\right\}\right)
\le
C\log\left(e+\frac{hm}{\tau}\right).$
Restricting the resulting net back to the original block-diagonal spectraplex,  gives a set \(T_P(\tau)\subseteq\Delta_P\) satisfying $|T_P(\tau)|\le \exp(C\tau)$
and such that every \(X\in\Delta_P\) has some \(Y\in T_P(\tau)\) with
\[
S(X\|Y)
\le
C\log\left(e+\frac{hm}{\tau}\right),
\]
which proves our lemma.
\end{proof}

\paragraph{Step 2: Product relative-entropy net over strata.}
We now combine the one-stratum nets across the block stratification. The main point is that a global density matrix need not distribute its trace uniformly across the strata. We therefore first discretize the vector of stratum masses and then, conditional on a choice of masses, apply Lemma~\ref{lem:one-stratum-net} separately inside each stratum with a mass-dependent net-size parameter. The product of these one-stratum nets has size \(\exp(O(s))\), while Jensen's inequality converts the sum of the stratum-wise entropy errors into the single logarithmic term $\log\left(e+\frac{Q}{s}\right)$ where $Q=\sum_{j=1}^J h_jm_j$. This is the step where the multiscale decomposition improves the crude one-scale parameter \(b_{\max}M\) over the parameter \(Q\).

\begin{lemma}[Product relative-entropy net over strata]
\label{lem:product-strata-net}
Let $H=\bigoplus_{\alpha\in\mathcal B}H_\alpha$
be a fixed common block decomposition, and consider  block stratification as in Definition \ref{def:stratification} and let $(h_j,m_j,q_j,Q)$ be the associated parameters.
Let
\[
\Delta
:=
\left\{
X\succeq 0:
\operatorname{tr}X=1,\ 
X=\bigoplus_{\alpha\in\mathcal B}X_\alpha
\text{ with }X_\alpha\in B(H_\alpha)
\right\}
\]
be the full block-diagonal spectraplex and \(S\subseteq[n]\) with \(s:=|S|\), and \(s\ge J\). Then there is a relative-entropy net $T_S\subseteq \Delta$ of the full block-diagonal spectraplex with $|T_S|\le \exp(Cs)$ such that 
 for every \(X\in\Delta\) there exists \(Y\in T_S\) satisfying
\[
S(X\|Y)
\le
C\log\left(e+\frac{Q}{s}\right),
\]
where  \(C>0\) is some  universal constant
\end{lemma}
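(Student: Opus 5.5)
The plan is to build $T_S$ as a union, over a discretized set of stratum-mass profiles, of products of the one-stratum nets from Lemma~\ref{lem:one-stratum-net}. Every $X\in\Delta$ splits as $X=\bigoplus_{j=1}^J p_j X_j$. Here $p_j:=\operatorname{tr}(X|_{\mathcal H_{\mathcal P_j}})$, so $p=(p_1,\dots,p_J)$ is a probability vector, and $X_j\in\Delta_{\mathcal P_j}$ (choose $X_j$ arbitrarily when $p_j=0$). For density matrices of this form the relative entropy decomposes exactly:
\[
S\Bigl(\bigoplus_j p_jX_j\,\Big\|\,\bigoplus_j w_jY_j\Bigr)=\sum_j p_j\log\frac{p_j}{w_j}+\sum_j p_j\,S(X_j\|Y_j).
\]
The net element $Y$ will therefore have three ingredients: a weight vector $w$, a mass-dependent budget $\tau_j$ for each stratum, and a choice $Y_j\in T_{\mathcal P_j}(\tau_j)$.

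\textbf{Discretization of masses.} First I fix the set $W$ of weight vectors. Take $\tilde p_j:=\lceil s p_j\rceil/s$, so that $\tilde p_j\ge p_j$ and $\sum_j\tilde p_j\le 1+J/s\le 2$, using $s\ge J$. Set $w:=(\tilde p_j+1/J)/Z$ with normalizer $Z\le 3$. Then $w_j\ge p_j/3$ for every $j$, and the classical term satisfies $\sum_j p_j\log(p_j/w_j)\le\log 3$. The vectors $(\lceil sp_j\rceil)_j$ are nonnegative integers with sum at most $2s$. Their number is at most $\binom{2s+J}{J}\le\binom{3s}{s}\le e^{O(s)}$, again by $J\le s$. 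So $|W|\le e^{O(s)}$.

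\textbf{Budget assignment and Jensen.} For a given discretized profile I set $\tau_j:=s\,\tilde p_j$, and use only the nets $T_{\mathcal P_j}(\tau_j)$; strata with $\tilde p_j=0$ carry no mass and contribute nothing. The product net then has size $\prod_j e^{C\tau_j}=e^{C\sum_j s\tilde p_j}\le e^{2Cs}$. Its quantum error is
\[
\sum_j p_j\,C\log\Bigl(e+\frac{q_j}{s\tilde p_j}\Bigr)\le C\sum_j p_j\log\Bigl(e+\frac{q_j}{s p_j}\Bigr)\le C\log\Bigl(e+\frac{Q}{s}\Bigr).
\]
The first inequality uses $\tilde p_j\ge p_j$. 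The second is Jensen's inequality for the concave function $\log(e+\cdot)$ under the probability weights $p_j$, since $\sum_j p_j\cdot q_j/(sp_j)=Q/s$. Taking the union over $W$ gives $|T_S|\le e^{O(s)}\cdot e^{O(s)}$. Every element is a genuine block-diagonal density matrix, so $T_S\subseteq\Delta$. The total error is at most $\log 3+C\log(e+Q/s)$, which has the required form after enlarging $C$.

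\textbf{Main obstacle.} The delicate step is keeping the budgets and the weights consistent: the budgets must be indexed by the discretized masses, not the true ones, so that the net is a finite union. The way to handle this is that rounding $p_j$ upward makes the budget at least $sp_j$, which keeps Jensen's inequality applicable. At the same time $\sum_j\tilde p_j\le 2$ keeps the total size at $e^{O(s)}$, and it is exactly this bound (together with the count of $W$) that needs the hypothesis $s\ge J$. A smaller point is that Lemma~\ref{lem:one-stratum-net} requires $\tau>0$. This is harmless: strata with $\tilde p_j=0$ have $p_j=0$, so I can assign them an arbitrary fixed state with weight $w_j=1/(JZ)>0$ and they add nothing to the error.
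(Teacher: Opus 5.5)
Your proposal is correct and follows essentially the same route as the paper. Both discretize the stratum masses into $e^{O(s)}$ profiles (using $s\ge J$), take products of one-stratum nets with budgets $\tau_j$ equal to $s$ times the discretized mass, and combine the exact block decomposition of relative entropy with Jensen's inequality for $\log(e+\cdot)$. The only difference is cosmetic: the paper uses the grid $\beta_j=(k_j+1)/(s+J)$ with $\sum_j k_j=s$, which is automatically a strictly positive probability vector with $\beta_j\ge\alpha_j/2$, whereas you round upward and then smooth and renormalize.
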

\begin{proof}
For each stratum \(\mathcal P_j\), let
\[
H_j:=\bigoplus_{\alpha\in\mathcal P_j}H_\alpha
\]
and let \(\Pi_j\) denote the orthogonal projection onto \(H_j\). Let
\[
\Delta_j
:=
\left\{
X\succeq0:
\operatorname{tr}X=1,\ 
X \text{ is block diagonal on }H_j
\right\}
\]
be the stratum spectraplex. Fix \(X\in\Delta\) and define its mass on the \(j\)-th stratum by $\alpha_j:=\operatorname{tr}(\Pi_jX\Pi_j)$
Then we have that \(\alpha_j\ge0\) and $\sum_{j=1}^J\alpha_j=1.$
Moreover, for \(\alpha_j>0\) we define the normalized state $$X_j:=\alpha_j^{-1}\Pi_jX\Pi_j\in\Delta_j.$$ Now, we discretize the vector of stratum masses. To this end, let
\[
\Gamma_s
:=
\left\{
\beta\in\mathbb R_+^J:
\beta_j=\frac{k_j+1}{s+J},\ k_j\in\mathbb Z_{\ge0},\
\sum_{j=1}^J k_j=s
\right\}.
\]
Using \(s\ge J\), we have $|\Gamma_s|
=
\binom{s+J-1}{J-1}
\le
\binom{2s}{s}
\le
4^s.$
For every probability vector \(\alpha=(\alpha_1,\ldots,\alpha_J)\), we can choose integers
\(k_j\ge \lfloor s\alpha_j\rfloor\) with \(\sum_j k_j=s\) such that the corresponding
\(\beta\in\Gamma_s\) satisfies
\[
\beta_j
=
\frac{k_j+1}{s+J}
\ge
\frac{s\alpha_j}{s+J}
\ge
\frac{\alpha_j}{2}.
\]

Now fix some \(\beta\in\Gamma_s\) and  apply Lemma~\ref{lem:one-stratum-net} for each stratum \(j\) with $\tau_j:=s\beta_j$, which gives a net \(T_j(\tau_j)\subseteq\Delta_j\) satisfying $|T_j(\tau_j)|\le \exp(C\tau_j)$ and relative-entropy error $C\log\left(e+\frac{h_jm_j}{s\beta_j}\right).$
Define the product net associated with \(\beta\) by
\[
T(\beta)
:=
\left\{
\bigoplus_{j=1}^J \beta_jY_j:
Y_j\in T_j(s\beta_j)
\right\}.
\]
Indeed, each element of \(T(\beta)\) belongs to \(\Delta\), since each \(Y_j\) has trace one and
\(\sum_j\beta_j=1\).
Finally we let $T_S:=\bigcup_{\beta\in\Gamma_s}T(\beta)$, which then is of size at most $$|T_S|
\le |\Gamma_s| \max_{\beta\in\Gamma_s}|T(\beta)|
\le
|\Gamma_s| \max_{\beta\in\Gamma_s} \prod_{j=1}^J \exp(Cs\beta_j) \le
|\Gamma_s|\exp(Cs)
\le
\exp(C's)$$ for some constant $C'>0$. It then remains to prove the relative-entropy approximation guarantee. Fix \(X\in\Delta\), and choose
\(\beta\in\Gamma_s\) such that $\beta_j\ge \frac{\alpha_j}{2}$  for all  $ j$.
For each \(j\) with \(\alpha_j>0\), choose \(Y_j\in T_j(s\beta_j)\) such that
\[
S(X_j\|Y_j)
\le
C\log\left(e+\frac{h_jm_j}{s\beta_j}\right),
\]
while for indices \(j\) with \(\alpha_j=0\) we choose \(Y_j\in T_j(s\beta_j)\) arbitrarily. Let
\[
Y:=\bigoplus_{j=1}^J\beta_jY_j\in T(\beta)\subseteq T_S. 
\]
Using the block decomposition of relative entropy, we get
\begin{equation} \label{lemma35:eq1}
    S(X\|Y)
=
\sum_{\alpha_j>0}\alpha_j\log\frac{\alpha_j}{\beta_j}
+
\sum_{\alpha_j>0}\alpha_jS(X_j\|Y_j).
\end{equation}
Now, using \(\beta_j\ge\alpha_j/2\) and the concavity of the log, we can bound the relative entropy in \eqref{lemma35:eq1} as follows
\begin{eqnarray*}
   S(X\|Y)&\le&   \log2 + C\sum_{\alpha_j>0}\alpha_j
\log\left(e+\frac{h_jm_j}{s\beta_j}\right) \le C'\sum_{\alpha_j>0}\alpha_j
\log\left(e+\frac{2h_jm_j}{s\alpha_j}\right).\\
&\le&  C' \log\left(
\sum_{\alpha_j>0}
\alpha_j
\left(e+\frac{2h_jm_j}{s\alpha_j}\right)
\right)\le C' \log\left(e+\frac{2}{s}\sum_{j=1}^Jh_jm_j\right)
=
C' \log\left(e+\frac{2Q}{s}\right).
\end{eqnarray*}
This proves the desired relative-entropy estimate and completes the proof.
\end{proof}

\paragraph{Step 3: Partial coloring from relative-entropy nets.}
We now pass from relative-entropy nets to partial colorings. The preceding results construct small nets for the block-diagonal spectraplex. To use them for discrepancy, we apply the linear map
\[
A_S(X):=\bigl(\operatorname{tr}(A_iX)\bigr)_{i\in S}.
\]
By trace duality, the polar of the active discrepancy body is controlled by the image of the block-diagonal spectraplex under \(A_S\). By Theorem~\ref{thm:renet-covering}, a relative-entropy net for the spectraplex therefore gives a covering estimate for the polar discrepancy body. The polar covering estimate implies a Gaussian-measure lower bound for a suitable scaling of the discrepancy body, and Rothvoss's partial-coloring theorem then gives the desired partial coloring. We summarize this in the following lemma. It is the step that converts an entropy error \(D\) into a partial-coloring cost of order \(\sqrt{sD}\).

\begin{lemma}[Relative-entropy nets give partial colorings]
\label{lem:entropy-net-to-partial-coloring}
Let \(A_1,\ldots,A_n\) be self-adjoint satisfying $\|A_i\|\le1$ and which are block diagonal with respect to a fixed common block decomposition, and let \(\Delta\) denote the corresponding block-diagonal spectraplex. Let \(S\subseteq[n]\), \(s:=|S|\), and define
\[
A_S(X):=\bigl(\operatorname{tr}(A_iX)\bigr)_{i\in S}.
\]

Assume that there exists a finite relative-entropy net \(T_0\subseteq\Delta\) of the  block-diagonal spectraplex with $|T_0|\le \exp(C_0s)$ such that 
 for every \(X\in\Delta\) there exists \(Y\in T_0\) satisfying
\[
S(X\|Y)\le D
\]
Then for every point \(y\in(-1,1)^S\), there exists an increment \(z\in\mathbb R^S\) such that
\[
y+z\in[-1,1]^S, \quad |\{i\in S: |y_i+z_i|=1\} |\ge cs,
\]
and
\[
\left\|
\sum_{i\in S}z_iA_i
\right\|
\le
C\sqrt{sD}.
\]
Here \(c>0\) is an absolute constant, and \(C>0\) depends only on \(C_0\).
\end{lemma}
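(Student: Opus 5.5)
We chain the three black boxes from the preliminaries: relative-entropy net $\Rightarrow$ polar covering (Theorem~\ref{thm:renet-covering}) $\Rightarrow$ Gaussian measure (Theorem~\ref{thm:covering-measure}) $\Rightarrow$ partial coloring (Theorem~\ref{thm:partial-coloring}).

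\textbf{Step 1 (polar body).} Set $t:=\sqrt{sD}$ and consider the symmetric convex body $K:=K_S(t)\subseteq\mathbb R^S$. By trace duality, for self-adjoint $M=\sum_{i\in S}z_iA_i$ the operator norm is attained on rank-one projections. Since each $A_i$ is block diagonal, $\Tr(MX)$ only sees the block-diagonal compression of $X$. Hence
\[
\Bigl\|\sum_{i\in S} z_iA_i\Bigr\|=\max_{X\in\pm\Delta}\langle z,A_S(X)\rangle .
\]
This gives
\[
K=\bigl\{z:\langle z,w\rangle\le t\ \ \forall w\in \pm A_S(\Delta)\bigr\},
\qquad
K^\circ=\tfrac1t\operatorname{conv}\bigl(A_S(\Delta)\cup -A_S(\Delta)\bigr).
\]

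\textbf{Step 2 (covering the polar).} Applying Theorem~\ref{thm:renet-covering} with the net $T_0$ gives
\[
N\bigl(A_S(\Delta),C\sqrt{D/s}\,B_\infty^S\bigr)\le e^{O(s)}.
\]
Symmetrizing only doubles the count. Passing to the convex hull requires a little care. One option is to use the standard fact that $B_\infty$-coverings of a set at scale $\epsilon$ transfer to its symmetric convex hull at scale $O(\epsilon)$ with $e^{O(s)}$ loss; this holds in dimension $s$ via a volumetric argument. Alternatively, one can check directly that the proof of Theorem~\ref{thm:covering-measure} only uses the covering of the extreme points. After rescaling by $1/t=1/\sqrt{sD}$ we obtain
\[
N\bigl(K^\circ,\tfrac{C}{s}B_\infty^S\bigr)\le e^{O(s)}.
\]
This is condition~(1) of Theorem~\ref{thm:covering-measure}, up to the constant $C$. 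That constant can be absorbed by replacing $t$ with $Ct$, since shrinking the polar by $C$ enlarges $K$ by $C$. Hence $\gamma_s(K_S(C't))\ge 2^{-O(s)}$, with constants depending only on $C_0$.

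\textbf{Step 3 (partial coloring).} The measure bound from Step 2 is $2^{-O(s)}$, while Theorem~\ref{thm:partial-coloring} requires $2^{-as}$ for a fixed absolute $a$. To bridge this I would enlarge the body by a constant factor $\lambda=\lambda(C_0)$. For any symmetric convex body $K$ with $\gamma_s(K)\ge 2^{-bs}$, the measure of $\lambda K$ tends to that of the whole space at an exponential rate. Alternatively, one can invoke the parameterized form of Theorem~\ref{thm:covering-measure} with a coarser covering radius. Either way we get $\gamma_s(\lambda K)\ge 2^{-as}$.

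Theorem~\ref{thm:partial-coloring} applied to $\lambda K$ and the shift $y$ then yields an increment $z\in C\lambda K\cap[-1,1]^S$. This $z$ satisfies $y+z\in[-1,1]^S$, has at least $cs$ coordinates of $y+z$ in $\{\pm1\}$, and obeys
\[
\Bigl\|\sum_{i\in S}z_iA_i\Bigr\|\le C\lambda C'\sqrt{sD}.
\]

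\textbf{Main obstacle.} The delicate points are the constant bookkeeping in Steps 2--3: passing from a covering of $A_S(\Delta)$ to a covering of its symmetric convex hull, and upgrading $2^{-O(s)}$ to $2^{-as}$ by scaling. The first I would handle by citing the route of \cite[Theorem~3.1]{dadush2022matrix}, which does exactly this. For the second I would use the parameterized implication (2)$\Rightarrow$(3) of Theorem~\ref{thm:covering-measure}, choosing the covering radius as a large constant multiple so that the resulting exponent falls below $a$.
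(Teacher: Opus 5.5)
Your chain (net $\Rightarrow$ cover of $A_S(\Delta)$ via Theorem~\ref{thm:renet-covering} $\Rightarrow$ cover of the polar $\Rightarrow$ Gaussian measure $\Rightarrow$ Theorem~\ref{thm:partial-coloring}) is the paper's, and your Step~1 is correct. However, both connecting steps you flag rest on claims that are false as stated.

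In Step~2, the ``standard fact'' is wrong: an $e^{O(s)}$-size cover of a set by $\epsilon B_\infty$-cubes does not in general transfer to its symmetric convex hull at scale $O(\epsilon)$. For example, $\{\pm1\}^s$ is covered by $2^s$ points at every scale. Its hull $B_\infty^s$, however, needs at least $(\sqrt{s}/C)^s$ cubes of radius $C/\sqrt{s}$, and scales of order $1/\sqrt{s}$ are exactly where $\rho=C\sqrt{D/s}$ lives. A volumetric argument only yields $(O(1/\epsilon))^s$. What rescues this step is that $A_S(\Delta)$ is itself convex (a linear image of $\Delta$) and lies in $B_\infty^S$. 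Hence its symmetric hull is $\{\lambda a-(1-\lambda)b:\ a,b\in A_S(\Delta),\ \lambda\in[0,1]\}$, which is covered by pairs of centres together with a $\rho$-grid in $\lambda$, at cost $e^{O(s)}\cdot O(1/\rho)$. That is exactly the paper's argument: pinch $W$ into the block algebra, Jordan-decompose to get $K_S^\circ\subseteq A_S(\Delta_{\le1})-A_S(\Delta_{\le1})$, and discretize the trace parameter. Your other option, that the proof of Theorem~\ref{thm:covering-measure} only needs a cover of a generating set of $K^\circ$, is not what that theorem states and would require reopening it.

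The substantive gap is Step~3. No constant dilation upgrades $\gamma_s(K)\ge 2^{-bs}$ to $\gamma_s(\lambda K)\ge 2^{-as}$ with $a<b$ for general symmetric convex $K$. For the slab $K=\{z:|z_1|\le 2^{-bs}\}$ (truncated by a large ball) one has $\gamma_s(\lambda K)\lesssim\lambda 2^{-bs}$, so $\lambda$ must be exponential in $s$. By the Lata{\l}a--Oleszkiewicz S-inequality, slabs are the extremal case, so no such dilation lemma holds in general.

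Your fallback via the parameterized form of Theorem~\ref{thm:covering-measure} fails for the same reason. Its output exponent is governed by the covering exponent, and enlarging the covering radius by a constant does not lower the only count you have, namely the $e^{O(s)}$ inherited from $|T_0|\le e^{C_0 s}$.

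The paper's proof has the same lacuna: it passes from $\gamma_s(c_0tK_S)\ge e^{-C_2s}$ straight to Theorem~\ref{thm:partial-coloring} without checking $C_2\le a\ln 2$. So you have found a genuine soft spot, but your repair does not close it. Within this framework the exponent has to be made small upstream, rather than by dilating $K$ afterwards. That means working with smaller nets and tracking how the count exponent propagates through Theorems~\ref{thm:renet-covering} and~\ref{thm:covering-measure}.
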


\begin{proof}
Let
\[
K_S
:=
\left\{
z\in\mathbb R^S:
\left\|
\sum_{i\in S}z_iA_i
\right\|
\le 1
\right\}
\]
be the discrepancy body of the active coordinates. We will show that a suitable scaling of \(K_S\) has Gaussian measure at least \(\exp(-O(s))\), and then apply the Gaussian partial-coloring theorem.

Using Theorem~\ref{thm:renet-covering} we  get that

\begin{equation}\label{lemma36:eq1}
    N\left(
A_S(\Delta),
C\sqrt{\frac{D}{s}}\,B_\infty^S
\right)
\le
\exp(C's),
\end{equation}
where \(B_\infty^S\) denotes the unit ball of \(\ell_\infty^S\). Set $\rho:=C\sqrt{\frac{D}{s}}$ and let
\[
\Delta_{\le 1}
:=
\left\{
X\succeq0:
\operatorname{tr}X\le1,\ 
X \text{ is block diagonal}
\right\}.
\]
Since \(A_i\) are contractions, we have $A_S(\Delta)\subseteq B_\infty^S.$
By discretizing the scalar trace parameter in \([0,1]\), the preceding covering estimate \eqref{lemma36:eq1} also gives
\begin{equation}\label{lemma36:eq2}
    N\left(
A_S(\Delta_{\le1}),
C\rho\,B_\infty^S
\right)
\le
\exp(C''s).
\end{equation}
Indeed, every \(X\in\Delta_{\le1}\) can be written as \(X=\lambda X_0\), with
\(\lambda\in[0,1]\) and \(X_0\in\Delta\) if \(\lambda>0\), and the scalar parameter \(\lambda\) may be discretized at mesh size comparable to \(\rho\). This increases the covering number only by an \(\exp(O(s))\) factor.
We now relate this covering estimate \eqref{lemma36:eq2} to the polar of \(K_S\). By trace duality,
\[
K_S^\circ
=
\left\{
\left(\operatorname{tr}(A_iW)\right)_{i\in S}:
W=W^*,\ \|W\|_1\le1
\right\}.
\]
Since every \(A_i\) is block diagonal, replacing \(W\) by its pinching onto the common block-diagonal algebra does not change any of the traces \(\operatorname{tr}(A_iW)\) and does not increase \(\|W\|_1\). Hence we may assume that \(W\) is block diagonal. Using the Jordan decomposition of $W$
\[
W=W_+-W_-,
\qquad
W_+,W_-\succeq0,
\]
we have  $\operatorname{tr}W_+ + \operatorname{tr}W_-=\|W\|_1\le1$, and thus \(W_+,W_-\in\Delta_{\le1}\). But this shows that  $K_S^\circ
\subseteq
A_S(\Delta_{\le1})-A_S(\Delta_{\le1}),$
and by \eqref{lemma36:eq2} we consequently get  for some constant $C'''>0$,
\begin{equation} \label{lemma36:eq3}
    N\left(
K_S^\circ,
C\rho\,B_\infty^S
\right)
\le
\exp(C'''s).
\end{equation}
Now, let $t:=C_1\sqrt{sD}.$ Using $(tK_S)^\circ=\frac{1}{t}K_S^\circ$ the preceding covering estimate \eqref{lemma36:eq3}  implies
\begin{equation}\label{lemma36:eq5}
    N\left(
(tK_S)^\circ,
\frac{1}{s}B_\infty^S
\right)
\le
\exp(C'''s).
\end{equation}
But by the covering-to-measure principle of Theorem~\ref{thm:covering-measure}, the covering number estimate of \eqref{lemma36:eq5} gives then the desired Gaussian measure lower bound for the discrepancy body $K_S$, namely
\begin{equation}\label{lemma36:eq4}
    \gamma_s(c_0tK_S)\ge \exp(-C_2s)
\end{equation}
where \(c_0,C_2>0\) are  absolute constants.

Finally, by using the Gaussian measure lower bound of \eqref{lemma36:eq4} and applying  the Gaussian partial-coloring theorem, Theorem~\ref{thm:partial-coloring}, to the symmetric convex body \(c_0tK_S\) and the point \(y\in(-1,1)^S\), we obtain an increment \(z\in C_3tK_S\) such that
$y+z\in[-1,1]^S$ and $|\{i\in S:|y_i+z_i|=1\} | \ge cs.$ Since \(z\in C_3tK_S\), so that by definition of \(K_S\) we get
\[
\left\|
\sum_{i\in S}z_iA_i
\right\|
\le
C_3t
\le
C\sqrt{sD},
\]
which proves our lemma.
\end{proof}

\paragraph{Proof of multiscale block partial coloring lemma.}

Finally, we are now able to prove our main partial coloring lemma.  
\begin{proof}[Proof of Lemma \ref{lem:stagewise-multiscale-block-pc}]
Fix an active set \(S\subseteq[n]\), write \(s:=|S|\), and let \(y\in(-1,1)^S\). Let
\[
\Delta
:=
\left\{
X=\bigoplus_{\alpha\in\mathcal B}X^{(\alpha)}:
X^{(\alpha)}\succeq0,\ 
\sum_{\alpha\in\mathcal B}\operatorname{tr}X^{(\alpha)}=1
\right\}
\]
denote the full block-diagonal spectraplex associated with the common block decomposition.

We first consider the case \(s\ge J\). By Lemma~\ref{lem:product-strata-net}, there exists a finite relative-entropy net $T_S\subseteq\Delta$ of the spectraplex $\Delta$ with $|T_S|\le \exp(Cs)$ such that for every \(X\in\Delta\) there exists \(Y\in T_S\) satisfying $S(X\|Y)
\le
C\log\left(e+\frac{Q}{s}\right).$ Thus by applying Lemma~\ref{lem:entropy-net-to-partial-coloring} with $D=C\log\left(e+\frac{Q}{s}\right)$, shows that there is a partial coloring   increment \(z\in\mathbb R^S\) with $y+z\in[-1,1]^S$  and $|\{i\in S:|y_i+z_i|=1\}|\ge cs$ satisfying,
\[
\left\|
\sum_{i\in S}z_iA_i
\right\|
\le
C\sqrt{s\log\left(e+\frac{Q}{s}\right)}.
\]
This proves the desired bound in the case \(s\ge J\).

Now suppose \(s<J\). In this regime we ignore the stratification and treat the full block decomposition as a single stratum. The total dimension is $M:=\sum_{\alpha\in\mathcal B}b_\alpha,$
and the largest block size is $b_{\max}:=\max_{\alpha\in\mathcal B}b_\alpha.$
Applying Lemma~\ref{lem:one-stratum-net} to this single stratum with parameter $\tau:=s$
gives a finite relative-entropy net \(T_S\subseteq\Delta\) with  $|T_S|\le \exp(Cs)$ such that  for every \(X\in\Delta\) there exists \(Y\in T_S\) satisfying $S(X\|Y)
\le
C\log\left(e+\frac{b_{\max}M}{s}\right).$
Applying Lemma~\ref{lem:entropy-net-to-partial-coloring} with $D=C\log\left(e+\frac{b_{\max}M}{s}\right)$
gives an partial coloring increment \(z\in\mathbb R^S\) with $y+z\in[-1,1]^S$ and $|\{i\in S:|y_i+z_i|=1\}|\ge cs$ satisfying
\[
\left\|
\sum_{i\in S}z_iA_i
\right\|
\le
C\sqrt{s\log\left(e+\frac{b_{\max}M}{s}\right)},
\]
which proves the desired bound in the case \(s<J\).
Finally, it remains to prove  upper bounds on \(b_{\max}\), \(M\). By assumption we have $Q=\sum_{j=1}^Jh_jm_j\le C_0n.$
Since each stratum is nonempty and \(h_j\) is the maximum block size in the stratum, we have $m_j\ge h_j$, so that also
$h_j^2\le h_jm_j\le Q$ for every \(j\).  Therefore $b_{\max}=\max_jh_j\le \sqrt Q\le \sqrt{C_0n}$.
Summing over \(j\) and using \(h_j\ge1\), we get $M=\sum_{j=1}^Jm_j
\le
\sum_{j=1}^Jh_jm_j
=
Q
\le C_0n.$
Consequently, we have that $b_{\max}M
\le
\sqrt{C_0n}\cdot C_0n
=
C_0^{3/2}n^{3/2}$, which completes our proof.
\end{proof}

\subsection{Proof of Theorem \ref{thm:block-diag-spencer}}

Finally, we will prove now the block diagonal Matrix Spencer Theorem by iterating Lemma~\ref{lem:stagewise-multiscale-block-pc}.
Write $Q=\sum_{j=1}^J h_jm_j$ and let $y^{(0)}=0\in[-1,1]^n$ and denote with $S_0:=[n]$ the initial active set. Suppose that after \(k\) stages we have constructed
$y^{(k)}\in[-1,1]^n$
and let $S_k:=\{i\in[n]: |y_i^{(k)}|<1\}$
be the current active coordinates. Let $s_k:=|S_k|$ denote the number of active coordinates and if \(s_k=0\), i.e there no active coordinates left and \(y^{(k)}\in\{\pm1\}^n\), then we stop with the iteration.
Suppose that \(s_k>0\) and apply Lemma~\ref{lem:stagewise-multiscale-block-pc} to the active set \(S_k\) and the point \(y^{(k)}|_{S_k}\in(-1,1)^{S_k}\) to obtain a partial coloring increment $z^{(k)}\in\mathbb R^{S_k}$ with $y^{(k)}|_{S_k}+z^{(k)}\in[-1,1]^{S_k}$ such that  at least \(c s_k\) active coordinates become integral, and have discrepancy
\begin{equation}\label{thm32:eq1}
    \left\|
\sum_{i\in S_k}z_i^{(k)}A_i
\right\|
\le
\Delta(s_k),
\end{equation}
where
\[
\Delta(s)
\le
C
\begin{cases}
\sqrt{s\log\left(e+\dfrac{Q}{s}\right)}, & s\ge J,\\[1.2em]
\sqrt{s\log\left(e+\dfrac{b_{\max}M}{s}\right)}, & s<J.
\end{cases}
\]
To continue the iteration, extend \(z^{(k)}\) by zero outside \(S_k\), and define $y^{(k+1)}:=y^{(k)}+z^{(k)}$. Then, we have that $y^{(k+1)}\in[-1,1]^n$
and the number of active coordinates decreases by a constant factor $s_{k+1}\le (1-c)s_k$.
Thus, after \(O(\log n)\) stages, all but at most \(O(1)\) coordinates have been colored. The remaining \(O(1)\) coordinates can be rounded arbitrarily, contributing only \(O(1)\) to the final discrepancy.

It remains to bound the total discrepancy accumulated over the partial-coloring stages. Since $y^{(0)}=0$,
and the final coloring \(x\in\{\pm1\}^n\) is obtained by summing the increments, the triangle inequality gives us by using \eqref{thm32:eq1}
\begin{equation}\label{thm32:eq2}
    \left\|
\sum_{i=1}^n x_iA_i
\right\|
\le
\sum_k
\left\|
\sum_{i\in S_k}z_i^{(k)}A_i
\right\|
+
O(1)
\le
\sum_k \Delta(s_k)+O(1).
\end{equation}
Finally, to obtain the desired discrepancy bound, we have to upper bound $\Delta(s_k)$ appearing in inequality \eqref{thm32:eq2}. To this end, consider first the stages for which \(s_k\ge J\). Since \(Q\le C_0n\), we have that $\Delta(s_k)
\le
C(C_0)\sqrt{s_k\log\left(e+\frac{n}{s_k}\right)}$. Using that the active-set sizes decrease geometrically, we can group the stages dyadically to obtain,
\begin{equation}\label{thm32:eq3}
    \sum_{s_k\ge J}\Delta(s_k)
\le
C(C_0)
\sum_{\ell\ge0}
\sqrt{2^{-\ell}n\log\left(e+2^\ell\right)}.
\end{equation}
 Now, using that $\sum_{\ell\ge0}2^{-\ell/2}\sqrt{1+\ell}<\infty$, we then get finally the bound $\sum_{s_k\ge J}\Delta(s_k)
\le
C(C_0)\sqrt n$, for some constant $C(C_0)>0$.  

We now consider the stages for which \(s_k<J\). Using Lemma~\ref{lem:stagewise-multiscale-block-pc}, we get that $b_{\max}M\le C(C_0)n^{3/2}$, and thus $\Delta(s_k)
\le
C(C_0)
\sqrt{s_k\log\left(e+\frac{n^{3/2}}{s_k}\right)}$. 
Again using similar ideas as in \eqref{thm32:eq3}, we obtain
\begin{equation}\label{thm32:eq4}
\sum_{s_k<J}\Delta(s_k)
\le
C(C_0)
\sqrt{
J\log\left(e+\frac{n^{3/2}}{J}\right)
}.
\end{equation}
Since \(J\le C_0\log(en)\), the bound in \eqref{thm32:eq4} is at most $C\log(en)\le C\sqrt n$ so that $\sum_{s_k<J}\Delta(s_k)
\le
C(C_0)\sqrt n$. 
Combining the two regimes and the \(O(1)\) final rounding contribution, we finally obtain for  \eqref{thm32:eq2} the discrepancy bound 
\[
\left\|
\sum_{i=1}^n x_iA_i
\right\|
\le
C(C_0)\sqrt n.
\]
This completes the proof of the block diagonal Matrix Spencer theorem.

\subsection{Proof of Algebraic Matrix Spencer Theorem}

In this section we will prove our main theorem.  The multiscale theorem from the previous section was stated in terms of a block stratification and the associated quantity $Q=\sum_{j=1}^J h_jm_j$. 
We now derive the form that will be used for finite-dimensional \(C^*\)-algebras. Suppose the matrices are simultaneously block diagonal with common block sizes \(b_\alpha\). If the square-sum $\sum_{\alpha\in\mathcal B} b_\alpha^2$ is \(O(n)\), then a dyadic stratification of the blocks has multiscale complexity \(Q=O(n)\). Thus, Theorem \ref{thm:block-diag-spencer} would give an \(O(\sqrt n)\) signing. Below we formulate a version of the Matrix Spencer theorem for block diagonal matrices in which the auxiliary stratification does not appear in the statement, rather formulating the discrepancy upper bound in terms of the intrinsic parameter $\sum_{\alpha\in\mathcal B} b_\alpha^2$ which will later become the parameter controlling the discrepancy in the $C^*$-algebra setting. 

\begin{theorem}[Square-sum block criterion]
\label{thm:square-summable-common-blocks}
Let \(A_1,\ldots,A_n\) be self-adjoint contractions which are block diagonal with respect to a fixed common block decomposition $H=\bigoplus_{\alpha\in\mathcal B}H_\alpha$ and denote $b_\alpha:=\dim H_\alpha$.
Assume that $\sum_{\alpha\in\mathcal B} b_\alpha^2\le C_1n$. 
Then there exist signs \(x_1,\ldots,x_n\in\{\pm1\}\) such that
\[
\left\|
\sum_{i=1}^n x_iA_i
\right\|
\le
O(\sqrt n)
\]
\end{theorem}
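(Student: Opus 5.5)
We reduce the statement to Theorem~\ref{thm:block-diag-spencer} by building a dyadic stratification of the block index set. Let $b_{\max}:=\max_\alpha b_\alpha$. Since $b_{\max}^2\le \sum_\alpha b_\alpha^2\le C_1 n$, we have $b_{\max}\le \sqrt{C_1 n}$. For $j\ge 1$ set
\[
\mathcal P_j:=\{\alpha\in\mathcal B:\ 2^{j-1}\le b_\alpha<2^{j}\},
\]
and discard the empty classes. Blocks of dimension $0$ can be deleted without affecting anything. The nonempty classes form a partition of $\mathcal B$ into strata as in Definition~\ref{def:stratification}. Because every block has size at most $\sqrt{C_1n}$, only indices $j\le 1+\tfrac12\log_2(C_1 n)$ can occur. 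Hence $J\le O(\log(en))$, with the constant depending only on $C_1$.

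Next we bound the stratified complexity. On stratum $\mathcal P_j$ the largest block has size $h_j<2^{j}$, and every block $\alpha\in\mathcal P_j$ satisfies $b_\alpha\ge 2^{j-1}>h_j/2$. Therefore
\[
h_jm_j=\sum_{\alpha\in\mathcal P_j}h_j b_\alpha\le \sum_{\alpha\in\mathcal P_j}2b_\alpha^2 .
\]
Summing over $j$ gives
\[
Q=\sum_j h_jm_j\le 2\sum_{\alpha\in\mathcal B}b_\alpha^2\le 2C_1 n=O(n).
\]
This comparison $h_j\le 2b_\alpha$ within each stratum is the only step where the choice of stratification matters. Dyadic grouping is exactly what keeps the factor bounded. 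A single stratum would instead give $b_{\max}M$, which can be as large as $n^{3/2}$.

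With these two estimates, both hypotheses of Theorem~\ref{thm:block-diag-spencer} hold: $Q\le O(n)$ and $J\le O(\log(en))$. The matrices $A_i$ are self-adjoint contractions that are block diagonal for the common decomposition, so that theorem yields signs $x\in\{\pm1\}^n$ with $\|\sum_i x_iA_i\|\le O(\sqrt n)$. The implicit constant depends only on $C_1$.

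The main point of care is not difficulty but bookkeeping. The constants in Theorem~\ref{thm:block-diag-spencer} must be allowed to depend on the constants in its hypotheses, here $2C_1$ and the constant in the bound on $J$. The proof of that theorem supplies exactly this dependence through $C(C_0)$.
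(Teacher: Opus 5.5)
Your proposal is correct and follows the paper's proof step for step. It uses the same dyadic stratification $\{\alpha: 2^{j-1}\le b_\alpha<2^{j}\}$ (the paper writes it as $2^k\le b_\alpha<2^{k+1}$) and the same estimate $h_jm_j\le 2\sum_{\alpha\in\mathcal P_j}b_\alpha^2$, giving $Q\le 2C_1n$. It also gets the same bound $J=O(\log(en))$ from $b_{\max}\le\sqrt{C_1n}$, and then applies Theorem~\ref{thm:block-diag-spencer} exactly as the paper does.
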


\begin{proof}
Our goal will be to construct a dyadic block stratification and then apply Theorem~\ref{thm:block-diag-spencer}. To this end, for \(k\ge0\), define
\[
\mathcal P_k
:=
\left\{
\alpha\in\mathcal B:
2^k\le b_\alpha<2^{k+1}
\right\},
\]
discarding the empty classes. These nonempty classes form a block stratification of \(\mathcal B\). For a nonempty class \(\mathcal P_k\), let $h_k:=\max_{\alpha\in\mathcal P_k} b_\alpha$ and $m_k:=\sum_{\alpha\in\mathcal P_k} b_\alpha$. Now,  since $h_k<2^{k+1}$  and \(b_\alpha\ge 2^k\) for any \(\alpha\in\mathcal P_k\), we have
\begin{equation}\label{thm37:eq1}
    h_km_k
=
h_k\sum_{\alpha\in\mathcal P_k}b_\alpha
\le
2^{k+1}\sum_{\alpha\in\mathcal P_k}b_\alpha
\le
2\sum_{\alpha\in\mathcal P_k}b_\alpha^2.
\end{equation}
Therefore, using \eqref{thm37:eq1} the multiscale complexity parameter $Q$ of this stratification satisfies $Q
=
\sum_k h_km_k
\le
2\sum_{\alpha\in\mathcal B}b_\alpha^2
\le
2C_1n$. It remains to check that the number of nonempty strata is logarithmic in \(n\). Indeed, since $\sum_{\alpha\in\mathcal B}b_\alpha^2\le C_1n$ 
every block size satisfies $b_\alpha\le \sqrt{C_1n}$, and hence for any $\cP_k\neq \emptyset$ we have that  $k\le \frac12\log_2(C_1n)$. Finally, we can apply 
 Theorem~\ref{thm:block-diag-spencer} to obtain a signing  $x\in\{\pm1\}^n$ such that 
\[
\left\|
\sum_{i=1}^n x_iA_i
\right\|
\le
C(C_1)\sqrt n,
\]
which completes the proof.
\end{proof}

We now turn to finite-dimensional \(C^*\)-algebras. By the Wedderburn decomposition, this setting reduces to the setting of Theorem~\ref{thm:square-summable-common-blocks}. The key point is that multiplicity spaces do not affect operator norms of signed sums, so the relevant parameter governing the discrepancy, the block-size square-sum is precisely \(\dim_{\mathbb C}\mathcal A\).

\begin{proof}[Proof of Algebraic Matrix Spencer Theorem \ref{thm:algebraic:spencer}]
We first prove the result under the additional assumption that the matrices
\(A_1,\ldots,A_n\) are self-adjoint. By the Wedderburn decomposition for finite-dimensional \(C^*\)-algebras, after conjugating by a unitary we may write
\[
\mathcal A
=
\bigoplus_{\alpha\in\mathcal B}
I_{r_\alpha}\otimes M_{d_\alpha}(\mathbb C)
\subseteq
\bigoplus_{\alpha\in\mathcal B}
B(\mathbb C^{r_\alpha}\otimes\mathbb C^{d_\alpha}).
\]
Thus each \(A_i\in\mathcal A\) has the form
\[
A_i
=
\bigoplus_{\alpha\in\mathcal B}
I_{r_\alpha}\otimes A_i^{(\alpha)},
\qquad
A_i^{(\alpha)}\in M_{d_\alpha}(\mathbb C).
\]
Since \(A_i\) is self-adjoint and \(\|A_i\|\le1\), also each block \(A_i^{(\alpha)}\) is self-adjoint and $\|A_i^{(\alpha)}\|\le1$ for all $i,\alpha$. 
Now, for every sign vector \(x\in\{\pm1\}^n\) we can write,
\[
\sum_{i=1}^n x_iA_i
=
\bigoplus_{\alpha\in\mathcal B}
I_{r_\alpha}\otimes
\left(
\sum_{i=1}^n x_iA_i^{(\alpha)}
\right),
\]
and hence for its spectral norm we get that,
\begin{equation} \label{thm13:eq1}
    \left\|
\sum_{i=1}^n x_iA_i
\right\|
=
\max_{\alpha\in\mathcal B}
\left\|
\sum_{i=1}^n x_iA_i^{(\alpha)}
\right\|.
\end{equation}
Therefore we may compress away the multiplicity spaces and consider instead the common-block matrices $\widetilde A_i
:=
\bigoplus_{\alpha\in\mathcal B} A_i^{(\alpha)}$
acting on $\widetilde H
:=
\bigoplus_{\alpha\in\mathcal B}\mathbb C^{d_\alpha}$.
These are self-adjoint contractions with common block sizes \(d_\alpha\). Now, using that $\sum_{\alpha\in\mathcal B}d_\alpha^2
=
\dim_{\mathbb C}\mathcal A
\le C_0n$,
we apply Theorem~\ref{thm:square-summable-common-blocks} to the family 
\(\widetilde A_1,\ldots,\widetilde A_n\), which shows that  there exist signs
\(x_1,\ldots,x_n\in\{\pm1\}\) such that
\begin{equation}\label{thm13:eq2}
    \left\|
\sum_{i=1}^n x_i\widetilde A_i
\right\|
\le
C(C_0)\sqrt n.
\end{equation}
But by the  identity in Equation \eqref{thm13:eq1} above and the discrepancy bound in \eqref{thm13:eq2} , the same signs satisfy
\[
\left\|
\sum_{i=1}^n x_iA_i
\right\|
\le
C(C_0)\sqrt n,
\]
which proves  the theorem for the self-adjoint case. 

The general case reduces to the self-adjoint case by the Hermitian dilation, i.e. by replacing the matrices $A_i$ by $H_i
:=
\begin{pmatrix}
0 & A_i \\
A_i^* & 0
\end{pmatrix}
\in M_2(\mathcal A)$. 
Then \(H_i\) is self-adjoint and $\|H_i\|=\|A_i\|\le1$. Moreover, the algebra \(M_2(\mathcal A)\) is finite-dimensional and satisfies $\dim_{\mathbb C}M_2(\mathcal A)=4\dim_{\mathbb C}\mathcal A\le 4C_0n$. Applying the proof of the self-adjoint case to \(H_1,\ldots,H_n\), we obtain signs
\(x_1,\ldots,x_n\in\{\pm1\}\) such that $\left\|
\sum_{i=1}^n x_iH_i
\right\|
\le
C(4C_0)\sqrt n$.  But since $\left\|
\sum_{i=1}^n x_iH_i
\right\|
=
\left\|
\sum_{i=1}^n x_iA_i
\right\|$, the same discrepancy bound also holds for the $A_i$.

\end{proof}

\section{Proof: Perturbations of Algebraic Families}
We recall the main partial coloring lemma of \cite{BansalJiangMeka2024MatrixSpencer}, adapted to our setting. 
\begin{lemma}[Main partial coloring of Bansal--Jiang--Meka ---  slightly adapted]
\label{lemma:bjm-package}
There are universal constants \(c,\eta,C>0\) with the following property. Let
\(L_1,\ldots,L_n\) be self-adjoint matrices satisfying
\[
\|L_i\|\le 1,
\qquad
\|L_i\|_F^2\le \frac{n}{\log^3(en)}
\]
for every \(i\in[n]\). Let \(S\subseteq[n]\), with \(s:=|S|\) be arbitrary and let $t_L(s):=
C\left(\sqrt{s}+n^{1/4}s^{1/4}\right)$.
Then there exist a subspace $H_S\subseteq \mathbb R^S$ with $\dim H_S\ge (1-\eta)s$ such that the  discrepancy body   $K_L(S):=\{z\in H_S:\left\|
\sum_{i\in S}z_iL_i
\right\|
\le
t_L(s)\}$  relative to this subspace satisfies
\[
\gamma_{H_S}\bigl(K_L(S)\bigr)\ge \exp(-\eta s).
\]
Consequently, for every point \(y\in(-1,1)^S\), there exists a partial coloring increment \(z\in\mathbb R^S\) such that $y+z\in[-1,1]^S$ and $|\{i\in S: |y_i+z_i|=1\}|\ge cs$ satisfying

\[
\left\|
\sum_{i\in S}z_iL_i
\right\|
\le
C\,t_L(s).
\]
\end{lemma}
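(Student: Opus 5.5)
The plan is to import the Bansal--Jiang--Meka machinery essentially verbatim and check that nothing in it depends on the ambient dimension beyond the Frobenius and operator-norm hypotheses. The lemma has two parts. The first is the Gaussian measure lower bound for the discrepancy body relative to a subspace $H_S$. The second, the partial coloring, follows from the first by applying Theorem~\ref{thm:partial-coloring-relative}. So the real content is the first part.

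\textbf{Step 1: building $H_S$.} I would construct $H_S$ as in \cite{BansalJiangMeka2024MatrixSpencer}, as the orthogonal complement of a small set of linear constraints on $z\in\mathbb R^S$.

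1. Fix a dyadic family of spectral thresholds. Use the Frobenius bound $\|L_i\|_F^2\le n/\log^3(en)$ to note that $\sum_{i\in S}L_i^2$ has trace at most $sn/\log^3(en)$.
2. By Markov/pigeonhole, only a few "heavy" eigendirections of the matrix variance $\sum_{i\in S}L_i^2$, and of its restrictions, carry large spectral weight.
3. Impose the linear constraints $\operatorname{tr}(L(z)P)=0$, or the vector analogues for each heavy direction. Each scale costs at most $\eta s/\log$-many constraints, so in total $\dim H_S\ge(1-\eta)s$.

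\textbf{Step 2: Gaussian measure via sharp noncommutative Khintchine.} On $H_S$ the Gaussian matrix $\sum_i g_iL_i$, with $g$ the standard Gaussian on $H_S$, has a decomposition into a high-variance, low-rank part and a part controlled by the Bandeira--Boedihardjo--van Handel inequality. The latter gives an operator norm of order $\sigma+\sigma^{1/2}v^{1/2}(\log d)^{3/4}$.

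1. The $\sigma$ term is $O(\sqrt s)$ because $\|L_i\|\le1$.
2. The Frobenius hypothesis bounds the weak variance proxy $v$ after the restriction to $H_S$, which yields the $n^{1/4}s^{1/4}$ term in $t_L(s)$. The $\log^3$ in the Frobenius hypothesis is exactly what absorbs the $(\log d)^{3/4}$ loss.
3. Markov's inequality then shows that $\gamma_{H_S}(K_L(S))\ge 1/2\ge\exp(-\eta s)$ for a suitable constant $C$. For the few small $s$ where the constant must be adjusted, one instead bounds the measure by $\exp(-\eta s)$ via a Gaussian concentration tail.

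\textbf{Step 3: the partial coloring.} Apply Theorem~\ref{thm:partial-coloring-relative} to the symmetric convex set $K_L(S)\subseteq H_S$ with the given $y\in(-1,1)^S$. This yields an increment $z\in CK_L(S)$ with $y+z\in[-1,1]^S$ and at least $cs$ coordinates of $y+z$ in $\{\pm1\}$. Membership in $CK_L(S)$ gives $\|\sum_{i\in S}z_iL_i\|\le C\,t_L(s)$ directly.

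\textbf{Main obstacle.} The hard part is Step 2, and specifically controlling the ambient dimension $d$ that appears inside the logarithm of the noncommutative Khintchine bound. In our setting the $L_i$ may act on a space much larger than $n$. I would first try to reduce to an effective dimension by restricting to the span of the ranges of the $L_i$'s heavy spectral parts, and check that the remaining dependence is only through $\|L_i\|_F^2$. This is the "slight adaptation" of their lemma, and it is where I expect the bookkeeping to require care. The rest is a black-box citation of their decomposition argument.
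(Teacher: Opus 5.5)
Your overall route is the paper's route. The paper gives no argument for this lemma beyond importing the subspace Gaussian-measure bound from Bansal--Jiang--Meka, and the ``consequently'' part is Theorem~\ref{thm:partial-coloring-relative}, exactly as in your Step~3. However, your sketch of the imported step would fail as written, because Step~1 constrains the wrong object.

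What must be small on $H_S$ is the weak variance $v(X)^2=\sup_{\norm{Y}_{\F}\le1}\mathbb E\operatorname{tr}(XY)^2$. For $X=\sum_{i\in S}g_iL_i$ with $g\sim N(0,P_H)$, this equals $\norm{P_HGP_H}$, where $G=(\operatorname{tr}(L_iL_j))_{i,j\in S}$ is the $s\times s$ Gram matrix. Heavy eigendirections of the $d\times d$ matrix $\sum_{i\in S}L_i^2$ only govern $\sigma$. That parameter needs no restriction at all, since $\mathbb E_{g\sim N(0,P_H)}X^2\preceq\sum_{i\in S}L_i^2$ already gives $\sigma^2\le s$.

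Your constraints $\operatorname{tr}(L(z)P)=0$ need not touch the top eigenvectors of $G$. Suppose every $L_i$ equals a fixed $E$ with zero diagonal, $E^2=I_k$ and $k\approx n/\log^3(en)$. Then $\sum_iL_i^2=sI_k$ is flat, and your constraint is vacuous, as is its per-eigenvector analogue in the standard basis. Yet $v^2=\norm{G}=sk$, which makes the Khintchine bound useless.

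The correct construction needs a single threshold and no dyadic scales. Since $\operatorname{tr}G=\sum_{i\in S}\norm{L_i}_{\F}^2\le sn/\log^3(en)$, fewer than $\eta s$ eigenvalues of $G$ exceed $n/(\eta\log^3(en))$. Take $H_S$ orthogonal to their eigenvectors $u$; equivalently, impose $\operatorname{tr}(L(z)Y_u)=0$ with $Y_u=\sum_iu_iL_i$. Then $v^2\le n/(\eta\log^3(en))$, and Bandeira--Boedihardjo--van Handel applies to $X$ directly, with no further decomposition. It gives $\mathbb E\norm{X}\lesssim_\eta\sqrt s+s^{1/4}n^{1/4}\bigl(\log d/\log(en)\bigr)^{3/4}$.

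Your ``main obstacle'' is genuine. The paper does not address it either, although in Theorem~\ref{thm:low-rank-plus-algebra} the ambient space can be huge because of the multiplicities. The reduction you gesture at works once the light spectral parts are handled explicitly. Split each $L_i=L_i^{\ge}+L_i^{<}$ at $\abs{\lambda}=n^{-1/2}$.

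For the heavy parts, $\operatorname{rank}L_i^{\ge}\le n\norm{L_i}_{\F}^2\le n^2$. So all $L_i^{\ge}$, $i\in S$, live on a common subspace of dimension $d\le n^3$. They also satisfy $\norm{L_i^{\ge}}_{\F}\le\norm{L_i}_{\F}$ and $(L_i^{\ge})^2\preceq L_i^2$. Run the Gram-matrix construction for them; the resulting $(\log d)^{3/4}$ is absorbed by the $\log^3(en)$.

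The light parts are bounded trivially by $\norm{\sum_{i\in S}z_iL_i^{<}}\le n^{-1/2}\norm{z}_1$. For $z$ Gaussian on $H_S$ this has expectation at most $s/\sqrt n\le\sqrt s$. Markov with the threshold enlarged by a factor $(1-e^{-\eta})^{-1}$ then gives $\gamma_{H_S}(K_L(S))\ge e^{-\eta s}$ for every $s\ge1$, and your Step~3 finishes the proof.
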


We next show that the  partial-coloring body of a family of matrices generating a low dimensional algebra remains large in terms of Gaussian measure even after restricting to an arbitrary high-dimensional subspace. We will need such stability statement to  obtain a simultaneous partial coloring of the two matrix families: Lemma \ref{lemma:bjm-package} produces a subspace \(H_S\) on which the low-rank/Frobenius part is controlled, and we must verify that the partial-coloring body of a family of matrices generating a low dimensional algebra still has large Gaussian measure inside that same subspace.
The key point, that allows us to show such a statement, is that for matrices coming from a low dimensional algebra,   largeness of Gaussian volume of the discrepancy body is proven through polar covering estimates. These estimates are stable under orthogonal projection. Hence, if the algebraic discrepancy body is large in \(\mathbb R^S\), then its restriction to any subspace \(H\subseteq\mathbb R^S\) remains large with respect to the Gaussian measure on \(H\). This gives a mechanism which allows us to obtain a partial coloring for the two different matrix families simultaneously.

\begin{lemma}[Subspace stability of the algebraic discrepancy body]
\label{lem:algebraic-body-subspace-stability}
Let \(B_1,\ldots,B_n\) be self-adjoint contractions contained in a finite-dimensional \(C^*\)-algebra \(\mathcal A\) satisfying $\dim_{\mathbb C}\mathcal A\le C_0n$.
Let \(S\subseteq[n]\) be arbitrary with \(s:=|S|\), and  denote the discrepancy body $K_B(S)\subseteq\mathbb R^S$ by $K_B(S):=\{z\in \R^S:\left\|
\sum_{i\in S}z_iB_i
\right\|
\le t_B(s)\}$ where $t_B(s):=
C\sqrt{s\log\left(e+\frac{n}{s}\right)}$. Then, for any linear subspace $H\subseteq\R^S$ we have that
\begin{equation*}
    \gamma_H\bigl(K_B(S)\cap H\bigr)\ge \exp(-Cs),
\end{equation*}
where $C>0$ is some absolute constant.
\end{lemma}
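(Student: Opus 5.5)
I will prove the lemma by running the polar-covering argument of Lemma~\ref{lem:entropy-net-to-partial-coloring} inside the subspace $H$, using that covering numbers of a polar body do not increase under orthogonal projection. The first step is the reduction already made in the proof of Theorem~\ref{thm:algebraic:spencer}. By Wedderburn I compress away the multiplicity spaces and replace the $B_i$ by block-diagonal self-adjoint contractions $\widetilde B_i$ with block sizes $d_\alpha$ satisfying $\sum_\alpha d_\alpha^2=\dim_\C\cA\le C_0n$. This does not change any operator norm of a linear combination, so $K_B(S)$ is unchanged. I then form the dyadic stratification from the proof of Theorem~\ref{thm:square-summable-common-blocks}, which gives $Q\le 2C_0n$ and $J\le O(\log(en))$ strata.

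Second, I build a relative-entropy net $T\subseteq\Delta$ with $|T|\le\exp(Cs)$ and error $D\le C\log(e+n/s)$. If $s\ge J$, Lemma~\ref{lem:product-strata-net} gives error $C\log(e+Q/s)\le C'\log(e+n/s)$ directly. If $s<J$, I use Lemma~\ref{lem:one-stratum-net} with $\tau=s$ on the whole decomposition. This gives error $C\log(e+b_{\max}M/s)\le C\log(e+n^{3/2}/s)$, which is at most a constant multiple of $\log(e+n/s)$ because $\log(n^{3/2}/s)\le\tfrac32\log(n/s)+\tfrac12\log s$ and $\log s\lesssim\log(e+n/s)$ fails only when $s$ is large; I would rather handle this case by observing $\log(e+n^{3/2}/s)\le \tfrac32\log(e+n/s)+\tfrac12\log s$ and absorbing the $\log s$ term, since $s<J=O(\log n)$ there. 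As in Lemma~\ref{lem:entropy-net-to-partial-coloring}, Theorem~\ref{thm:renet-covering} together with trace duality and pinching then yields
\[
N\bigl(K_S^\circ, C\sqrt{D/s}\,B_\infty^S\bigr)\le\exp(C s),
\]
where $K_S$ is the unit discrepancy body in $\R^S$. Rescaling by $t_B(s)=C\sqrt{sD}$ gives $N\bigl(K_B(S)^\circ,\tfrac1s B_\infty^S\bigr)\le \exp(Cs)$. Since $\tfrac1sB_\infty^S\subseteq \tfrac1{\sqrt s}B_2^S$, this also gives $N\bigl(K_B(S)^\circ,\tfrac1{\sqrt s}B_2^S\bigr)\le\exp(Cs)$.

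Third, I pass to the subspace. Let $P$ be the orthogonal projection onto $H$ and $k=\dim H$. The polar of $K_B(S)\cap H$ taken inside $H$ equals $P\bigl(K_B(S)^\circ\bigr)$; this is the standard duality between sections and projections, valid because $K_B(S)$ is a symmetric convex body. The projection $P$ is a contraction and $P(B_2^S)=B_2^H$, so
\[
N\bigl(P(K_B(S)^\circ),\tfrac{1}{\sqrt s}B_2^H\bigr)\le \exp(Cs).
\]
I then apply the parameterized (2)$\Rightarrow$(3) direction of Theorem~\ref{thm:covering-measure} in ambient dimension $k$ with $r=s\ge k$.

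The main obstacle is that Theorem~\ref{thm:covering-measure} asks for radius $1/\sqrt k$, while I have radius $1/\sqrt s\le 1/\sqrt k$. The mismatch goes in the favourable direction, since a smaller radius is a stronger covering statement. What needs checking is that the parameterized form really tolerates $r\ge k$ with this radius and yields $\gamma_H(c\,K_B(S)\cap H)\ge \exp(-C'' s)$. If the stated form is too rigid, I would reprove the implication directly: the covering bound on the polar gives an $\ell_2$ mean-width/dual Sudakov-type estimate for the Gaussian norm $\mathbb E\|g\|_{K\cap H}$, which combined with a Gaussian concentration and small-ball argument gives the measure bound. Finally, the constant $c$ is absorbed into the constant $C$ in $t_B(s)$, which gives $\gamma_H\bigl(K_B(S)\cap H\bigr)\ge\exp(-Cs)$.
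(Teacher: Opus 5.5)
Your proposal is correct and follows the paper's proof essentially step for step. The shared steps are: Wedderburn compression plus dyadic stratification; the entropy-net/polar-covering bound $N(K_B(S)^\circ,\frac1s B_\infty^S)\le\exp(Cs)$; the section--projection duality $(K_B(S)\cap H)^\circ_H=\operatorname{Proj}_H(K_B(S)^\circ)$; and the parameterized covering-to-measure principle applied in $H$ with $r=s\ge\dim H$. The radius mismatch you flag is resolved exactly as in the paper, by the inclusion $\frac{1}{\sqrt s}B_2^H\subseteq\frac{1}{\sqrt{\dim H}}B_2^H$, so your fallback argument is not needed. Your explicit treatment of the regime $s<J$ fills in a case the paper leaves implicit.
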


\begin{proof}
Fix \(S\subseteq[n]\), write \(s:=|S|\), and let \(H\subseteq\mathbb R^S\) be a linear subspace.

By the Wedderburn, we may view the matrices \(B_i\) as common-block matrices with block sizes \(d_\alpha\) satisfying $\sum_\alpha d_\alpha^2
=
\dim_{\mathbb C}\mathcal A
\le C_0n.$
Thus we may consider again a dyadic stratification of these blocks as in the proof of Theorem \ref{thm:square-summable-common-blocks} with the multiscale parameter \(Q\le C(C_0)n\) and with \(J\le C(C_0)\log(en)\). Let $$K_0(S):=
\Bigl\{
z\in\mathbb R^S:
\left\|
\nlsum_{i\in S}z_iB_i
\right\|
\le 1
\Bigr\}$$ 
be the unit discrepancy body of the family $B_1,\dots,B_n$. By the entropy-net and polar-covering argument used in Lemmas~\ref{lem:product-strata-net}--\ref{lem:entropy-net-to-partial-coloring}, with the above dyadic stratification, we obtain
\begin{equation}\label{lem42:eq1}
    N\Bigl(
K_0(S)^\circ,
C\sqrt{\frac{D(s)}{s}}\,B_\infty^S
\Bigr)
\le
\exp(Cs),
\end{equation}
where $D(s):=C(C_0)\log\left(e+\frac{n}{s}\right)$. Since $K_B(S):=t_B(s)K_0(S)$ we have by  \eqref{lem42:eq1} that 
\begin{equation}\label{lem42:eq2}
    N\Bigl(
K_B(S)^\circ,
\frac{1}{s}B_\infty^S
\Bigr)
\le
\exp(Cs)
\end{equation}

It remains to show that \(K_B(S)\cap H\) has large Gaussian measure in \(H\). To this end, recall that for a symmetric convex set \(K\subseteq\mathbb R^S\), the polar of its restriction to \(H\) is $(K\cap H)^\circ_H = \operatorname{Proj}_H(K^\circ)$, where the polar on the left is taken inside the Euclidean space \(H\). Applying this to \(K=K_B(S)\), and projecting the preceding cover in Equation \eqref{lem42:eq2} gives,

\begin{equation}\label{lem42:eq3}
    N\left( (K_B(S)\cap H)^\circ_H, \operatorname{Proj}_H\left(\frac1sB_\infty^S\right) \right) \le \exp(Cs).
\end{equation}
Using  $ \frac1sB_\infty^S\subseteq \frac1{\sqrt s}B_2^S$ and that the orthogonal projection maps \(B_2^S\) into \(B_2^H\), it follows from Equation \eqref{lem42:eq3} that 
\begin{equation}\label{lem42:eq4}
    N\left(
(K_B(S)\cap H)^\circ_H,
\frac{1}{\sqrt{s}}B_2^H
\right)
\le
\exp(Cs).
\end{equation}

As \(\dim H\le s\), we have $ \frac1{\sqrt s}B_2^H \subseteq \frac1{\sqrt{\dim H}}B_2^H$, and therefore by \eqref{lem42:eq4} also 
\begin{equation}\label{lem42:eq5}
    N\left( (K_B(S)\cap H)^\circ_H, \frac1{\sqrt{\dim H}}B_2^H \right) \le \exp(Cs).
\end{equation}
Hence, using the covering in \eqref{lem42:eq5}, the covering-to-measure principle in Theorem \ref{thm:covering-measure} in the space \(H\) gives
 
\[
\gamma_H\bigl(K_B(S)\cap H\bigr)\ge \exp(-Cs).
\]
This completes the proof.
\end{proof}

Finally, we can prove Matrix Spencer for low-rank perturbations of algebraic
families.

\begin{proof}[Proof of Theorem \ref{thm:low-rank-plus-algebra}]
We prove the result again by a standard partial coloring iteration. To this end, let $y^{(0)}=0\in[-1,1]^n$ and set $S_0:=[n]$. Suppose that after \(k\) steps we have a fractional coloring \(y^{(k)}\in[-1,1]^n\) and let $S_k:=\{i\in[n]: |y_i^{(k)}|<1\}$ denote the active coordinates that still need to be colored. We denote with $s_k:=|S_k|$  number of active coordinates $S_k$ and we stop with the iteration if \(s_k=0\). Suppose \(s_k>0\). Apply Lemma~\ref{lemma:bjm-package} to the family \(\{L_i\}_{i\in S_k}\), which then shows that there exists a subspace $H_k\subseteq\mathbb R^{S_k}$ of dimension at least $\dim H_k\ge (1-\eta)s_k$ such that
\begin{equation}\label{thm15:eq1}
    \gamma_{H_k}\bigl(K_L(S_k)\bigr)\ge \exp(-\eta s_k),
\end{equation}
where  $K_L(S_k):=\{z\in H_{S_k}:\left\|
\sum_{i\in S_k}z_iL_i
\right\|
\le
t_L(s_k)\}$  and $t_L(s_k):=
C\left(\sqrt{s_k}+n^{1/4}s_k^{1/4}\right)$. 
Moreover,  applying Lemma~\ref{lem:algebraic-body-subspace-stability} to the algebraic part \(\{B_i\}_{i\in S_k}\) and to the subspace \(H_k\),  shows that 
\begin{equation}\label{thm15:eq2}
    \gamma_{H_k}\bigl(K_B(S_k)\cap H_k\bigr)\ge \exp(-C s_k),
\end{equation}
where $K_B(S_k):=\{z\in \R^S_k:\left\|
\sum_{i\in S_k}z_iB_i
\right\|
\le t_B(s_k)\}$ and $t_B(s_k):=
C\sqrt{s_k\log\left(e+\frac{n}{s_k}\right)}$.
Both \(K_L(S_k)\) and \(K_B(S_k)\cap H_k\) are symmetric convex subsets of \(H_k\). Hence, using \eqref{thm15:eq1} and \eqref{thm15:eq2} we get by the Gaussian correlation inequality (\cite{Royen2014GaussianCorrelation,LatalaMatlak2017Royen}),
\begin{equation}\label{thm15:eq3}
    \gamma_{H_k}\bigl(K_L(S_k)\cap K_B(S_k)\cap H_k\bigr)
\ge
\gamma_{H_k}\bigl(K_L(S_k)\bigr)
\gamma_{H_k}\bigl(K_B(S_k)\cap H_k\bigr)
\ge
\exp(-C' s_k).
\end{equation}
Therefore, we can apply Theorem \ref{thm:partial-coloring-relative}, the Gaussian partial-coloring theorem in the subspace \(H_k\) to the body $K_L(S_k)\cap K_B(S_k)\cap H_k$, and obtain a partial coloring increment $z^{(k)}\in \mathbb R^{S_k}$ to the point \(y^{(k)}|_{S_k}\)  such that 

\begin{equation}\label{thm15:eq4}
    \left\|
\sum_{i\in S_k}z_i^{(k)}L_i
\right\|
\le C\,t_L(s_k),
\qquad
\left\|
\sum_{i\in S_k}z_i^{(k)}B_i
\right\|
\le C\,t_B(s_k),
\end{equation}
where $y^{(k)}|_{S_k}+z^{(k)}\in[-1,1]^{S_k}$ and at least \(c s_k\) coordinates of \(y^{(k)}|_{S_k}+z^{(k)}\) lie in \(\{\pm1\}\).
To continue the iteration, extend \(z^{(k)}\) by zero outside \(S_k\), and define $y^{(k+1)}:=y^{(k)}+z^{(k)}$. Then the number of active coordinates decreases geometrically $s_{k+1}\le (1-c)s_k$.
Iterating this construction, and rounding the remaining \(O(1)\) coordinates arbitrarily at the end, we obtain a full coloring. Using \eqref{thm15:eq4} we get  by  triangle inequality that,
\begin{equation}\label{thm15:eq5}
    \Bigl\|
\nlsum_{i=1}^n x_iA_i
\Bigr\|
\le
C\sum_{k\ge0} t_L(s_k)
+
C\sum_{k\ge0} t_B(s_k)
+
O(1).
\end{equation}

In the following we will  bound the two terms  on the right-hand side of \eqref{thm15:eq5}. For the first term we obtain by using the geometric decay of \(s_k\), 
\begin{equation}\label{thm15:eq6}
  \sum_{k\ge0}\sqrt{s_k}
\le
\sqrt n\sum_{k\ge0}(1-c)^{k/2}
\le
C\sqrt n,  
\end{equation}
and also
\begin{equation}\label{thm15:eq7}
    \sum_{k\ge0}n^{1/4}s_k^{1/4}
\le
n^{1/4}n^{1/4}\sum_{k\ge0}(1-c)^{k/4}
\le
C\sqrt n.
\end{equation}
Therefore, combining both \eqref{thm15:eq6} and \eqref{thm15:eq7} we get, $\sum_{k\ge0}t_L(s_k)\le C\sqrt n$. 
To obtain a bound on the second term in \eqref{thm15:eq5} we  group the active-set sizes dyadically. The argument uses the same idea as in the proof of Theorem \ref{thm:block-diag-spencer}, we record it here simply for completeness.  For \(\ell\ge0\), let $I_\ell
:=
\left\{
k:
2^{-(\ell+1)}n<s_k\le 2^{-\ell}n
\right\}$.  Since \(s_{k+1}\le(1-c)s_k\), each dyadic band \(I_\ell\) contains at most \(O_c(1)\) indices. If \(k\in I_\ell\), then $s_k\le 2^{-\ell}n$ and $\frac ns_k\le 2^{\ell+1}$. 
Hence
\begin{equation}\label{thm15:eq8}
    t_B(s_k)
\le
C(C_0)\sqrt{2^{-\ell}n\log(e+2^{\ell+1})}
\le
C(C_0)\sqrt n\,2^{-\ell/2}\sqrt{1+\ell}.
\end{equation}
Using \eqref{thm15:eq8} and summing over the dyadic bands gives
\[
\sum_{k\ge0}t_B(s_k)
\le
C(C_0)\sqrt n
\sum_{\ell\ge0}2^{-\ell/2}\sqrt{1+\ell}
\le
C(C_0)\sqrt n.
\]

Thus, along the same partial-coloring iteration,
\[
\sum_{k\ge0}t_L(s_k)\le C\sqrt n,
\qquad
\sum_{k\ge0}t_B(s_k)\le C(C_0)\sqrt n,
\]
so that by \eqref{thm15:eq5} we finally obtain 
$\Bigl\|
\nlsum_{i=1}^n x_iA_i
\Bigr\|
\le
C(C_0)\sqrt n$.
\end{proof}

\section*{Statement on LLM Use}
Throughout this project, we used GPT-5.5 (Pro) extensively as an interactive proof assistant, both for expanding on our ideas, and for carrying out the technical details of proofs along paths that seemed most promising. The counterexample shared in the appendix was found by GPT, after we tried to coax it to extend the $O(\sqrt{n})$ claim for ``easy'' matrices to the variance formulation of the conjecture. GPT's role in helping prove the results was crucial toward speeding up our research on this work. We also used GPT's assistance with writing, but all final mathematical statements, proofs, and wording are by the authors, who take responsibility for the errors. 

\bibliographystyle{plainnat}
\bibliography{ref}
\appendix 
\section{A Counterexample to Variance-Sensitive Matrix Spencer}

The variance-sensitive strengthening considered here replaces the ambient scale $\sqrt n$ by the intrinsic variance parameter $\left\|\sum_{i=1}^n A_i^2\right\|_{\rm op}^{1/2}$. This strengthening is natural from the perspective of matrix concentration, where the same variance parameter governs the random spectral norm of $\sum_{i=1}^{n}x_{i}A_{i}$ for a random signing $x\in\{\pm\}^{n}$ up to a log factor. It is also consistent with the rank-one case for which Kyng, Luh, and Song proved the variance sensitive Matrix Spencer bound using techniques from \cite{marcus2015interlacing}.
Here we however show that this variance-sensitive strengthening cannot hold in full generality. 
The counterexample is  witnessed by a family of diagonal matrices, and this somewhat surprisingly  matches the predictions of the noncommutative Khintchine inequalities \cite{BandeiraBoedihardjoVanHandel2023}, that is, \emph{in the presence of commutativity the log factor cannot be removed}. Therefore, it might be  plausible that in the case when the family of matrices is highly noncommutative  the variance strengthening might hold.

\begin{theorem}\label{thm:diagctx}
There exists a family of diagonal matrices $A_{1}, \dots , A_{n} \in \mathbb R^{n\times n}$ with $\|A_i\|_{\op}\le 1$ and
\[
\text{disc}(A_{1},\dots, A_{n})  = \Theta(\sqrt{\log(n)})
\]
while 
\[
 \left\|\sum_{i=1}^n A_i^2\right\|_{\op}^{1/2}  = \Theta(1).
\]
\end{theorem}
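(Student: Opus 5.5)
The plan is to exploit commutativity directly: for diagonal matrices the problem is vector discrepancy in $\ell_\infty$, where a Hadamard/cube-type construction forces a $\sqrt{\log n}$ loss while every row has $\ell_2$ norm one. Assume first that $n=2^k$; general $n$ is handled by taking $k=\lfloor\log_2 n\rfloor$ and padding with zero rows and zero matrices, which changes neither quantity. Index the $n$ diagonal coordinates by sign patterns $\varepsilon\in\{\pm1\}^k$. For $i\le k$ define the diagonal matrix $A_i$ by
\[
(A_i)_{\varepsilon\varepsilon}:=\frac{\varepsilon_i}{\sqrt k},
\]
and set $A_i:=0$ for $k<i\le n$. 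Then every entry has modulus at most $1/\sqrt k\le 1$, so $\|A_i\|_{\op}\le1$.

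The variance computation is immediate. The matrix $\sum_i A_i^2$ is diagonal with $\varepsilon$-th entry $\sum_{i\le k}\varepsilon_i^2/k=1$, so $\bigl\|\sum_i A_i^2\bigr\|_{\op}^{1/2}=1=\Theta(1)$.

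For the discrepancy, fix any signing $x\in\{\pm1\}^n$. The $\varepsilon$-th diagonal entry of $\sum_i x_iA_i$ is $\langle x_{[k]},\varepsilon\rangle/\sqrt k$.

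For the lower bound, I choose the coordinate $\varepsilon:=x_{[k]}$. This entry equals $k/\sqrt k=\sqrt k$, so every signing has $\bigl\|\sum_i x_iA_i\bigr\|_{\op}\ge\sqrt k$.

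For the upper bound, Cauchy--Schwarz gives $|\langle x_{[k]},\varepsilon\rangle|\le k$ for every $\varepsilon$. Hence every signing, and in particular any one, has discrepancy at most $\sqrt k$.

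Combining the two bounds, $\mathrm{disc}(A_1,\dots,A_n)=\sqrt k=\Theta(\sqrt{\log n})$. There is no real obstacle here. The only point needing care is the bookkeeping that the construction lives in $\mathbb R^{n\times n}$ with exactly $n$ matrices, which the zero padding handles. I would also remark that this matches the commutative sharpness of noncommutative Khintchine: the $\log$ factor cannot be removed for diagonal families, as noted in the appendix text.
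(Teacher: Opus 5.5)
Your proof is correct. It uses the same cube-indexed construction $(A_i)_{\varepsilon\varepsilon}=\varepsilon_i/\sqrt{k}$ as the paper, and the same lower-bound trick of evaluating at the coordinate $\varepsilon=x_{[k]}$. The one difference is that you pad with zero matrices instead of the paper's fillers $\frac{1}{\sqrt m}E_{\pi(k),\pi(k)}$. With zero padding, every signing gives norm exactly $\sqrt{k}$ and the variance is exactly $1$, so you do not need the paper's specific choice of signs at the coordinates $p,q$. Your padding also handles $n$ that are not powers of $2$.
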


\begin{proof}[Proof of Theorem \ref{thm:diagctx}]
Let $m \geq 2$ an integer, and set $n = 2^{m}$. We will index the matrix entries by elements $s = (s_{1}, \dots, s_{m})\in S$, and identify $A_{i} \in \mathbb R^{S\times S} $. 
Now, let $p= (1,\dots,1), q = (-1,\dots, -1)$ and choose a subset $U\subseteq S\setminus\{p,q\}$ with $|U|=m$ and write $F = U^{c}$ for the complement. Consider a bijection 
\[
\pi:\{m+1,\ldots,n\}\to F.
\]

We define our $n \times n$ diagonal matrices as follows. 
For \(i=1,\ldots,m\),  we set
\[
(A_i)_{s,s}=\frac{s_i}{\sqrt m},
\qquad s\in S,
\]
while for \(k=m+1,\ldots,n\), we let
\[
A_k=\frac1{\sqrt m}E_{\pi(k),\pi(k)},
\]
where we denote with \(E_{r,r}\)  the diagonal matrix with a single one in coordinate \(r\) and zero elsewhere.

We begin with computing the variance parameter. Since the matrices are diagonal, it suffices to compute diagonal entries of 
\[
\sum_{i=1}^n (A_i)_{s,s}^2 = \sum_{i=1}^m  \frac{s_i^2}{m} + \sum_{i=m+1}^n \frac{1}{m} \mathbf 1_{\{s = \pi(k))\}} = 1 + \frac{1}{m} \mathbf 1_{\{s\in F\}},
\]
and hence, $ \left\|\sum_{i=1}^n A_i^2\right\|_{\op} =  1 + \frac{1}{m}$.

Now, let $x\in\{\pm1\}^{n}$ be an arbitrary signing, and consider $X= \sum_{i=1}^n  x_i A_i$. For any $s\in S$ its diagonal entry is 
\[
X_{s,s}
=
\frac1{\sqrt m}\sum_{i=1}^m x_i s_i
+
\frac1{\sqrt m}
\mathbf 1_{\{s\in F\}}x_{\pi^{-1}(s)},
\]
and, in particular for $s=x$ we get then,
\[
|X_{x,x}|
=
|\sqrt{m}
+
\frac1{\sqrt m}
\mathbf 1_{\{x\in F\}}x_{\pi^{-1}(x)}| \ge
\sqrt m-\frac1{\sqrt m}
=
\frac{m-1}{\sqrt m}.
\]
Therefore we obtain that, 
\[
\left\|\sum_{i=1}^n x_i A_i\right\|_{\op} = \|X\|_{\op}
\ge
|X_{x,x}|
\ge
\frac{m-1}{\sqrt m}.
\]

Finally, it remains to show that there is in fact a signing that attains this lower bound. To this end, let 
$x_{1} = \cdots= x_{m} = 1$ and  $x_{k_p}=-1,  x_{k_q}=1$, where $k_p:=\pi^{-1}(p),  k_q:=\pi^{-1}(q)$, and let all the remaining coordinates be chosen arbitrarily. For any $s\in S$, we have that

\[
\sqrt m\, X_{s,s}
=
\sum_{j=1}^m s_j+\mathbf 1_{\{s\in F\}}x_{\pi^{-1}(s)}.
\]
In particular, for $s=p,q$ we get that

\[
\sqrt m\, X_{p,p}=m-1, \qquad \sqrt m\, X_{q,q}=-(m-1).
\]
Now, let $s\in S\setminus\{p,q\}$. Then, \(s\) is not the all-plus or all-minus vector, so that 
\[
|\sum_{j=1}^m s_j|\le m-2,
\]
and hence,
\[
\left|\sqrt m\, X_{s,s}\right|
\le |\sum_{j=1}^m s_j|+1
\le 
m-1.
\]
Therefore every scaled diagonal entry has absolute value at most \(m-1\), and the entries at
\(p\) and \(q\) attain this value, so that
\[
\left\|X\right\|_{\mathrm{op}}
=
\frac{m-1}{\sqrt m}.
\]
Together with the lower bound above, this then shows
\[
\inf_{x\in\{\pm1\}^n}
\left\|\sum_{i=1}^n x_{i} A_i\right\|_{\mathrm{op}}
=
\frac{m-1}{\sqrt m}.
\]

\end{proof}

\end{document}